\numberwithin{equation}{section}
\numberwithin{figure}{section}
\theoremstyle{plain}
\newtheorem{thm}{\protect\theoremname}[section]
\theoremstyle{definition}
\newtheorem{rem}[thm]{\protect\remarkname}
\theoremstyle{definition}
\newtheorem{defn}[thm]{\protect\definitionname}
\theoremstyle{plain}
\newtheorem{prop}[thm]{\protect\propositionname}
\theoremstyle{plain}
\newtheorem{lem}[thm]{\protect\lemmaname}
\theoremstyle{plain}
\theoremstyle{plain}
\newtheorem{cor}[thm]{\protect\corollaryname}
\theoremstyle{definition}
\newtheorem{problem}{\protect\problemname}
\theoremstyle{definition}
\theoremstyle{definition}
\theoremstyle{definition}
\theoremstyle{definition}
\newcommand{\eps}{\varepsilon}
\newcommand{\R}{\mathbb{R}}
\newcommand{\N}{\mathbb{N}}
\newcommand{\Z}{\mathbb{Z}}
\newcommand{\mE}{\mathcal{E}}\newcommand{\mS}{\mathcal{S}}
\newcommand{\mX}{\mathcal{X}}
\newcommand{\mP}{\mathcal{P}}
\newcommand{\mU}{\mathcal{U}}
\newcommand{\bp}{\begin{proof}}
	\newcommand{\ep}{\end{proof}}
\newcommand{\ummdim}{\overline{\mdim}_M}
\newcommand{\lmmdim}{\underline{\mdim}_M}
\newcommand{\mmdim}{\mdim_M}
\DeclareMathOperator{\diam}{diam}
\DeclareMathOperator{\mdim}{mdim}
\newcommand{\mBKe}{\mathrm{mBKe}}
\DeclareMathOperator{\supp}{supp}
\DeclareMathOperator{\Leb}{Leb}
\providecommand{\conjecturename}{Conjecture}
\providecommand{\corollaryname}{Corollary}
\providecommand{\definitionname}{Definition}
\providecommand{\examplename}{Example}
\providecommand{\lemmaname}{Lemma}
\providecommand{\problemname}{Problem}
\providecommand{\propositionname}{Proposition}
\providecommand{\remarkname}{Remark}
\providecommand{\theoremname}{Theorem}
\providecommand{\taskname}{Task}
\providecommand{\pytaniename}{Pytanie}
\providecommand{\trudnosciname}{Trudnoci}
\begin{document}
\sloppy
\title{Around the variational principle for metric mean dimension}

\author[Y. Gutman]{Yonatan Gutman$^1$}
\address{$^1$Institute of Mathematics, Polish Academy of Sciences,
	ul. \'Sniadeckich 8, 00-656 Warszawa, Poland}
\email{y.gutman@impan.pl}

\author[A. \'{S}piewak]{Adam \'{S}piewak$^2$}
\address{$^2$Institute of Mathematics, University of Warsaw, ul. Banacha 2, 02-097 Warszawa, Poland}
\email{a.spiewak@mimuw.edu.pl}

\begin{abstract}
We study variational principles for metric mean dimension. First we prove that in the variational principle of Lindenstrauss and Tsukamoto it suffices to take supremum over ergodic measures.  Second we derive a variational principle for metric mean dimension involving growth rates of measure-theoretic entropy of partitions decreasing in diameter which holds in full generality and in particular does not necessitate the assumption of tame growth of covering numbers. The expressions involved are a dynamical version of R\'{e}nyi information dimension. Third we derive a new expression for Geiger-Koch information dimension rate for ergodic shift-invariant measures. Finally we develop a lower bound for metric mean dimension in terms of Brin-Katok local entropy.
\end{abstract}

\keywords{metric mean dimension, variational principles, information dimension}

\subjclass[2000]{37A05 (Measure-preserving transformations), 37A35 (Entropy and other invariants), 37B40 (Topological entropy), 94A34 (Rate-distortion theory)}

\thanks{Y.G and A.\'S were partially supported by the National Science Center (Poland) grant 2016/22/E/ST1/00448.} 

\maketitle

\section{Introduction}\label{sec:intro}

\subsection{General background}

This paper contributes to the study of ergodic aspects of metric mean dimension - an invariant of dynamical systems introduced by Lindenstrauss and Weiss \cite{LW00} and devised to quantify the complexity of infinite entropy systems. A link between ergodic theory and metric mean dimension was established recently by Lindenstrauss and Tsukamoto \cite{lindenstrauss_tsukamoto2017rate}, who proved a variational principle expressing metric mean dimension as a supremum of certain rate-distortion functions over the set of invariant measures of the system. The goal of this note is to further study variational principle of \cite{lindenstrauss_tsukamoto2017rate} as well as its ramifications in terms of measure-theoretic entropy and Brin-Katok local entropy. Specifically in Section \ref{sec:ergodic_measures} we prove that in the Lindenstrauss-Tsukamoto variational principle it is enough to take supremum over ergodic measures. In Section \ref{sec:mrid} we derive a variational principle for metric mean dimension involving growth rates of measure-theoretic entropy of partitions decreasing in diameter.
The expressions involved are a dynamical version of the Pesin's definition of the so-called R\'{e}nyi information dimension for measures in general metric spaces. The main ingredient of the proof is the local variational principle for the entropy of a fixed open cover. In Section \ref{sec:single_mrid} we introduce mean R\'enyi information dimension of an invariant measure and show that it equals Geiger-Koch information dimension rate in the context of stochastic processes. Finally in Section \ref{sec:brin_katok} we introduce mean Brin-Katok local entropy of a dynamical system and prove that it gives a lower bound for metric mean dimension.

\subsection{Metric mean dimension}\label{sec:mmdim}

By a \emph{topological dynamical system} we understand a triple $(\mX, \rho, T)$ consisting of a compact metric space $(\mX, \rho)$ and a homeomorphism $T : \mX \to \mX$.

\begin{defn}\label{def:hashtag} Let $(\mX, \rho)$ be a compact metric space. For $\eps > 0$, the \emph{$\eps$-covering number} of a subset $A \subset \mX$, denoted by $\#(A,\rho,\varepsilon)$, is the minimal cardinality $N$ of an open cover $\{U_{1},\dots,U_{N}\}$ of $A$ such that all $U_{n}$ have diameter at most $\varepsilon$.
\end{defn}

\begin{defn}\label{d:mmdim}
	\label{def:d_n}Let $(\mX, \rho)$ be a compact metric space and let $T:\mathcal{X}\to \mathcal{X}$
	be a homeomorphism. For $n\in\N$ define a metric $\rho_{n}$ on $\mathcal{X}$
	by $\rho_{n}(x,y)=\max \limits_{0\leq k<n}\rho(T^{k}x,T^{k}y)$.
	The upper and lower \emph{metric mean dimensions} of the system $(\mX, \rho, T)$ are defined as
	\[
	\overline{\mdim}_{M}(\mX, \rho, T)=\limsup_{\eps\to0} \frac{S(\mX, \rho, T, \eps)}{\log\frac{1}{\eps}}
	\]
	and	
	\[
	\underline{\mdim}_{M}(\mX, \rho, T)=\liminf_{\eps\to0} \frac{S(\mX, \rho, T, \eps)}{\log\frac{1}{\eps}},
	\]
	where
	\[S(\mX, \rho, T, \eps) = \lim \limits_{n \to \infty} \frac{1}{n}\log\#(\mathcal{X},\rho_{n},\eps)\]
	and $\#(\mX, \rho_n, \eps)$ is the $\eps$-covering number of $\mX$ with respect to the metric $\rho_n$ (see Definition \ref{def:hashtag}). The limit defining $S(\mX, \rho, T, \eps)$  exists due to the subadditivity of the sequence $n\mapsto\log\#(\mathcal{X},\rho_{n},\eps)$. If the upper and lower limits coincide, then we call their common value the \emph{metric mean dimension} of $(\mX, \rho, T)$ and denote it by $\mmdim(\mX, \rho, T)$.
\end{defn}

Metric mean dimension was initially introduced in \cite{LW00} as an upper bound for topological mean dimension - a topological invariant of dynamical systems introduced by Gromov \cite{G} and studied by Lindenstrauss and Weiss \cite{LW00} in connection with embedding problems in topological dynamics. Further connections between embedding properties and topological mean dimension were studied by several authors \cite{LT12, GutLinTsu15, gutman2017application, GutTsu16}. Metric mean dimension turned out to be a useful tool in calculating topological mean dimension of certain systems (see e.g. \cite{TsYangMills18} and \cite{TsBrody18} for applications to dynamical systems arising from geometric analysis) as well as a meaningful quantity from the point of view of analog compression (see \cite{GS19short, GS20mmdim_compress}).

\subsection{The classical variational principle}

Let $\mX$ be a measurable space. Let $\mu$ be a probability measure on $\mX$ and let $P = \{A_1, ..., A_N\}$ be a measurable partition of $\mX$. The \emph{entropy} $H_{\mu}(P)$ of $P$ with respect to the measure $\mu$ is defined as
\[ H_{\mu}(P) = - \sum \limits_{A \in P} \mu(A) \log \mu(A) \]
(with the convention $0 \cdot \infty = 0$). For a topological dynamical system $(\mX, \rho, T)$, we say that a Borel probability measure $\mu$ on $\mX$ is $T$\emph{-invariant} if $\mu(A) = \mu(T^{-1}A)$ for all Borel sets $A \subset \mX$. Let $\mP_{T}(\mathcal{X})$ denote the set of all such measures. For a given $\mu\in \mP_{T}(\mathcal{X})$ and a finite Borel partition $P$ of $\mX$, the \emph{(dynamical) entropy} of $P$ is defined as
\[ h_{\mu}(P) = \lim \limits_{n \to \infty} \frac{H_{\mu} (P \vee T^{-1}P \vee ... \vee T^{-(n-1)}P)}{n},\]
where $P \vee Q = \{ A \cap B : A\in P, B \in Q \}$ denotes the join of partitions $P$ and $Q$ and $T^{-1}P = \{ T^{-1}A : A \in P \}$ denotes the preimage of partition $P$. The \emph{measure-theoretic (Kolmogorov-Sinai) entropy} $h_{\mu}(T)$ of $\mu$ is defined as
\[ h_{\mu}(T) = \sup \{ h_\mu(P) : P \text{ is a finite Borel partition of } \mX \}. \]
The \emph{topological entropy} $h_\mathrm{top}(\mX,T)$ of the topological dynamical system $(\mX,\rho, T)$ is defined as
\[ h_\mathrm{top}(\mX,T) = \lim \limits_{\eps \to 0} S(\mX, \rho, T,\eps) = \lim \limits_{\eps \to 0} \lim \limits_{n \to \infty} \frac{\log\#(\mathcal{X},\rho_{n},\eps)}{n} \]
(with $\rho_n$ as in Definition \ref{d:mmdim}).

One of the basic results of ergodic theory is the variational principle, connecting topological entropy with measure-theoretic entropy, proved originally in \cite{goodman1971relating}:

\begin{thm}\label{thm:entropy_var_prin}
	Let $(\mX, \rho, T)$ be a topological dynamical system. Then
	\[h_\mathrm{top}(\mX,T) = \sup \{ h_{\mu}(T) : \mu \in \mP_{T}(\mX) \}.\]
\end{thm}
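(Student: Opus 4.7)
The plan is to prove the two inequalities separately, following the classical approach of Goodwyn (for the upper bound) and Misiurewicz (for the harder lower bound).

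For the direction $h_\mu(T) \leq \htop(\mX, T)$ for every $\mu \in \mP_T(\mX)$, fix a finite Borel partition $P = \{A_1, \ldots, A_k\}$ and $\delta > 0$. Using inner regularity of $\mu$, pick compact sets $B_i \subset A_i$ so that the partition $Q = \{B_0, B_1, \ldots, B_k\}$ with $B_0 = \mX \setminus \bigcup_{i \geq 1} B_i$ satisfies $H_\mu(P \mid Q) < \delta$. Since the $B_i$ ($i \geq 1$) are pairwise disjoint and compact, there is $\eps > 0$ such that any set of $\rho$-diameter less than $\eps$ meets at most two atoms of $Q$. Given a minimal cover of $\mX$ by sets of $\rho_n$-diameter at most $\eps$, each such set is then contained in the union of at most $2^n$ atoms of $Q_n := \bigvee_{i=0}^{n-1} T^{-i}Q$, giving
\[
\tfrac{1}{n} H_\mu(Q_n) \leq \log 2 + \tfrac{1}{n}\log \#(\mX, \rho_n, \eps).
\]
Passing to $n \to \infty$ and $\eps \to 0$ yields $h_\mu(Q, T) \leq \htop(\mX, T) + \log 2$; the unwanted $\log 2$ is then removed by applying the same bound to $T^N$ with partition $\bigvee_{i=0}^{N-1}T^{-i}Q$, using $h_\mu(T^N) = N h_\mu(T)$ and $\htop(\mX, T^N) = N \htop(\mX, T)$, and letting $N \to \infty$. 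Finally $h_\mu(P, T) \leq h_\mu(Q, T) + H_\mu(P \mid Q) \leq \htop(\mX, T) + \delta$; sending $\delta \to 0$ and taking sup over $P$ finishes this direction.

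For the direction $\htop(\mX, T) \leq \sup_\mu h_\mu(T)$, fix $\eps > 0$ and let $E_n$ be a maximal $(n, \eps)$-separated set, with $s_n := |E_n|$ satisfying $\limsup_n \frac{1}{n} \log s_n \geq S(\mX, \rho, T, \eps)$. Define empirical measures
\[
\sigma_n = \frac{1}{s_n} \sum_{x \in E_n} \delta_x, \qquad \mu_n = \frac{1}{n} \sum_{i=0}^{n-1} T^i_* \sigma_n,
\]
and pass to a subsequence along which $\mu_n \to \mu$ weak-$*$ and $\frac{1}{n} \log s_n$ attains its $\limsup$. A standard telescoping estimate shows $\mu \in \mP_T(\mX)$. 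Choose a Borel partition $P_\eps$ of $\mX$ with $\diam(A) < \eps$ and $\mu(\partial A) = 0$ for each $A \in P_\eps$ (obtained by intersecting a fine cover by balls of generic radii). Because each atom of $\bigvee_{i=0}^{n-1} T^{-i} P_\eps$ has $\rho_n$-diameter below $\eps$, it contains at most one point of $E_n$; hence $H_{\sigma_n}\bigl(\bigvee_{i=0}^{n-1} T^{-i} P_\eps\bigr) = \log s_n$. For fixed $q \in \N$, grouping the indices $[0, n)$ into blocks of length $q$ starting at varying offsets and applying concavity of $H_\bullet$ yields
\[
\frac{1}{n}\log s_n \leq \frac{1}{q} H_{\mu_n}\!\left(\bigvee_{i=0}^{q-1} T^{-i} P_\eps\right) + \frac{2q\log(k+1)}{n}.
\]
Since $\mu(\partial A) = 0$ for all $A \in P_\eps$, the map $\nu \mapsto H_\nu(\bigvee_{i=0}^{q-1} T^{-i} P_\eps)$ is continuous at $\mu$; letting $n \to \infty$ along the chosen subsequence and then $q \to \infty$ gives $S(\mX, \rho, T, \eps) \leq h_\mu(P_\eps, T) \leq h_\mu(T)$. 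Finally $\eps \to 0$ yields $\htop(\mX, T) \leq \sup_\mu h_\mu(T)$.

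The main obstacle is the block-averaging step: $\sigma_n$ is not $T$-invariant and carries all the combinatorial information (through the $(n,\eps)$-separated set), while invariance only appears in the weak-$*$ limit $\mu$ of the averaged measures $\mu_n$. Bridging these two requires carefully grouping the $n$ indices into length-$q$ blocks with varying offsets so that concavity of entropy converts the equality $H_{\sigma_n}(P_\eps^{(n)}) = \log s_n$ into an inequality for $H_{\mu_n}(P_\eps^{(q)})$ with error $O(q/n)$, after which the $\mu$-null boundary condition on $P_\eps$ permits passage to the limit. The remaining ingredients (inner regularity, subadditivity of $n \mapsto \log\#(\mX, \rho_n, \eps)$, and the $T^N$-renormalization trick) are routine.
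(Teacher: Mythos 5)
The paper does not supply a proof of this theorem: it is stated as a classical result and attributed to Goodman \cite{goodman1971relating}, so there is no in-paper argument to compare against. What you give is a correct rendition of the now-standard two-part argument — Goodwyn's counting estimate for the direction $h_\mu(T)\le \htop(\mX,T)$ (compact inner approximation of the atoms, the observation that a set of small $\rho_n$-diameter meets at most $2^n$ atoms of $\bigvee_{i=0}^{n-1}T^{-i}Q$, followed by the $T^N$-renormalization to remove the residual $\log 2$), and Misiurewicz's empirical-measure argument for $\htop(\mX,T)\le \sup_\mu h_\mu(T)$ (maximal separated sets, time-averaged measures, block decomposition with concavity of entropy, and passage to a weak-$*$ limit using a partition with $\mu$-null boundaries). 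This is not Goodman's original 1971 proof, which predates Misiurewicz's simplification, but it is the argument found in standard modern references, and it is sound.

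Two small points worth tightening. In the first direction, once $Q$ is fixed the scale $\eps$ is also fixed (determined by the mutual distances of the compact sets $B_i$), so the step ``passing to $n\to\infty$ and $\eps\to 0$'' should simply invoke $S(\mX,\rho,T,\eps)\le \htop(\mX,T)$ for that fixed $\eps$, which holds because $\eps\mapsto S(\mX,\rho,T,\eps)$ is nonincreasing. In the second direction, the comparison between $|E_n|$ for a maximal $(n,\eps)$-separated set and the covering number $\#(\mX,\rho_n,\cdot)$ from Definition \ref{def:hashtag} carries a factor of two in the scale (a spanning set of radius $\eps$ gives a cover by sets of diameter $\le 2\eps$), so the clean statement is $\limsup_n\frac1n\log|E_n|\ge S(\mX,\rho,T,2\eps)$; this is harmless since $\eps\to 0$ at the end, but it is worth recording to avoid a silent mismatch of definitions.
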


For more on entropy theory of dynamical systems see \cite{D11}. The main goal of this paper is to study similar formulas expressing metric mean dimension in terms of measure-theoretic invariants of a system.

\subsection{The Lindenstrauss-Tsukaomoto variational principle}

In \cite{lindenstrauss_tsukamoto2017rate}, Lindenstrauss and Tsukamoto provided a variational principle for metric mean dimension in terms of the following rate-distortion function:

\begin{defn}\label{def:rate_dist_LT}
	Let $(\mX, \rho, T)$ be a topological dynamical system and $\mu\in \mP_{T}(\mathcal{X})$. For $ p \in [1, \infty),\ n\in\mathbb{{N}}$ and $\eps>0$ define $R_{\mu,p}(\varepsilon,n)$ as the infimum
	of
	\[\frac{I(X;Y)}{n},\]
	where $X$ and $Y=(Y_{0},\dots,Y_{n-1})$ are random variables defined
	on some probability space $(\Omega,\mathbb{P})$ such that
	
	\begin{itemize}
		\item $X$ takes values in $\mathcal{X}$, and its law is given by $\mu$.
		\item Each $Y_{k}$ takes values in $\mathcal{X}$, and $Y$ approximates
		the process $(X,TX,\dots,T^{n-1}X)$ in the sense that
		\begin{equation}\label{eq: distortion condition-1}
		\mathbb{E}\left(\frac{1}{n}\sum_{k=0}^{n-1}\rho(T^{k}X,Y_{k})^p\right)<\varepsilon^p.
		\end{equation}
	\end{itemize}
	Here $\mathbb{E}(\cdot)$ is the expectation with respect to the probability
	measure $\mathbb{P}$ and $I(X;Y)$ is the mutual information of random vectors $X$ and $Y$ (see \cite{Gray11} and \cite{lindenstrauss_tsukamoto2017rate}). As the sequence $n \mapsto n R_{\mu,p}(\eps, n)$ is subadditive (which follows as in \cite[Theorem 9.6.1]{Gallager68}), we can make the following definition:
	\[R_{\mu,p}(\varepsilon)=\lim_{n \to \infty} R_{\mu,p}(\varepsilon,n) = \inf_{n \in \N}R_{\mu,p}(\varepsilon,n).\]
	The function $\eps \to R_{\mu,p}(\eps)$ is called the $L^p$ \emph{rate-distortion function} of $\mu$.
\end{defn}

\begin{rem}\label{rem:rd_function_finite}
	As pointed out in \cite[Remark IV.3]{lindenstrauss_tsukamoto2017rate}, in Definition \ref{def:rate_dist_LT} it is enough to consider random vectors $Y$ taking finitely many values. As $I(X;Y) \leq H(Y) \leq \infty$ for such $Y$, we obtain also $R_{\mu, p}(\eps) < \infty$ for every $\eps>0$.
\end{rem}

Note that the condition (\ref{eq: distortion condition-1}) above is given in terms of the average distance $\frac{1}{n} \sum \limits_{k=0}^{n-1}\rho(T^k x, T^k y)^p$, while metric mean dimension is defined via the maximum metric $\rho_n$. In order to compare these two notions, the following condition is introduced in \cite{lindenstrauss_tsukamoto2017rate}:

\begin{defn}\label{def:tame_growth}(\cite[Condition II.3]{lindenstrauss_tsukamoto2017rate}) Let $(\mX, \rho)$ be a compact metric space. It is said to have the \emph{tame growth of covering numbers} if for every $\delta>0$ we have
	\[ \lim \limits_{\eps \to 0} \eps^{\delta} \log \# (\mX, \rho, \eps) = 0. \]
\end{defn}

Lindentrauss and Tsukamoto proved the following variational principle for metric mean dimension:

\begin{thm}\label{thm:lin_tsu_var_prin}(\cite[Corollary III.6]{lindenstrauss_tsukamoto2017rate}) Let $(\mX, \rho)$ be a compact metric space having the tame growth of covering numbers and let $T : \mX \to \mX$ be a homeomorphism. Then, for any $p \in [1, \infty)$
	\[ \ummdim(\mX, \rho, T) = \limsup \limits_{\eps \to 0} \sup_{\mu \in \mP_{T}(\mX)} \frac{ R_{\mu, p}(\eps)}{\log \frac{1}{\eps}}\]
	and
	\[ \lmmdim(\mX, \rho, T) = \liminf \limits_{\eps \to 0} \sup_{\mu \in \mP_{T}(\mX)} \frac{ R_{\mu, p}(\eps)}{\log \frac{1}{\eps}}.\]
\end{thm}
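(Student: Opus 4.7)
The plan is to establish the two inequalities $\leq$ and $\geq$ separately; the $\limsup$ and $\liminf$ versions are parallel, so I focus on the $\limsup$. The easy direction, namely $R_{\mu,p}(\eps) \leq S(\mX,\rho,T,\eps'')$ for every $\mu \in \mP_T(\mX)$ and every $\eps'' < \eps$, does not require tame growth. Fix $n \in \N$ and let $\{U_1,\ldots,U_N\}$ be a $\rho_n$-cover realising $N = \#(\mX,\rho_n,\eps'')$; pick $u_i \in U_i$. For $X \sim \mu$, put $Y_k = T^k u_{j(X)}$ where $j(X)$ is the index of a cover element containing $X$. Then $\rho(T^k X, Y_k) \leq \eps''$ almost surely, so the $L^p$ distortion is below $\eps$; since $Y$ takes at most $N$ values, $I(X;Y) \leq H(Y) \leq \log N$, whence $R_{\mu,p}(\eps,n) \leq (\log N)/n$. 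Letting $n \to \infty$ and then $\eps'' \nearrow \eps$ gives the inequality, which after division by $\log(1/\eps)$ and passage to $\limsup$ produces the desired upper bound on $\ummdim$.

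For the reverse inequality the strategy is to build an invariant measure witnessing the covering numbers. Fix $\eps > 0$ and for each $n$ choose a maximal $(\rho_n,\eps)$-separated set $E_n \subset \mX$, so that $|E_n| \geq \#(\mX,\rho_n,2\eps)$. Consider the uniform probabilities $\nu_n$ on $E_n$, their time-averages $\mu_n = n^{-1}\sum_{k=0}^{n-1} T^k_* \nu_n$, and a weak-$*$ subsequential limit $\mu \in \mP_T(\mX)$. The goal is to show $R_{\mu,p}(\eps') \geq (\log|E_n|)/n - o(\log(1/\eps))$ for a suitable $\eps' \asymp \eps$, by first proving the lower bound for $R_{\nu_n,p}$ and then transferring the estimate to $\mu$ using affinity and lower semi-continuity of $\mu \mapsto R_{\mu,p}(\eps')$ on $\mP_T(\mX)$.

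The central technical estimate is as follows. Suppose $Y = (Y_0,\ldots,Y_{n-1})$ is a finitely-valued quantizer of $X \sim \nu_n$ satisfying $\E[n^{-1}\sum_k \rho(T^k X,Y_k)^p] < (\eps')^p$. Set $\alpha$ slightly larger than $(2\eps'/\eps)^p$; by Markov's inequality, except on a set of $\nu_n$-probability at most $\beta = O((\eps'/\eps)^p/\alpha)$, the bad set $B(X) := \{k : \rho(T^k X,Y_k) > \eps/2\}$ satisfies $|B(X)| \leq \alpha n$. Given $Y = y$, the subset $B$, and an $(\eps/4)$-quantization $(z_k)_{k \in B}$ of the orbit at the bad indices, the entire orbit $(T^k X)_{k<n}$ is determined within $\rho_n$-distance strictly less than $\eps$; since $E_n$ is $\eps$-separated, at most one $x \in E_n$ is compatible with such a triple $(y,B,(z_k))$. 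Counting triples and applying a standard decomposition $H(X|Y) \leq H(\mathbf{1}_{\text{good}}|Y) + (1-\beta)\log M + \beta \log|E_n|$ with $M = \binom{n}{\lfloor\alpha n\rfloor}\#(\mX,\rho,\eps/4)^{\alpha n}$, one obtains
\[ I(X;Y) \geq (1-\beta) \log |E_n| - \alpha n \log \#(\mX,\rho,\eps/4) - n H(\alpha) - O(1), \]
where $H(\alpha) = -\alpha\log\alpha - (1-\alpha)\log(1-\alpha)$. Dividing by $n\log(1/\eps)$ and choosing $\alpha = \alpha(\eps) \to 0$ at an appropriate rate, the tame growth hypothesis $\log\#(\mX,\rho,\eps/4) = o(\eps^{-\delta})$ (for every $\delta > 0$) forces $\alpha \log\#(\mX,\rho,\eps/4)/\log(1/\eps) \to 0$, while $H(\alpha)/\log(1/\eps) \to 0$ automatically.

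The main obstacle is precisely the conversion between the $L^p$ average distortion built into $R_{\mu,p}$ and the uniform distortion implicit in the maximum metric $\rho_n$: an optimal quantizer can distribute its error unevenly across the $n$ coordinates, approximating most orbit points very tightly and leaving a few essentially free, and it is the geometric cost of re-encoding these few bad coordinates that must be controlled to close the gap with $S(\mX,\rho,T,\eps)$. The tame growth assumption is the quantitative hypothesis that exactly this cost is negligible on the logarithmic scale $\log(1/\eps)$ intrinsic to metric mean dimension. Once the estimate above is in hand, selecting $n$ along a subsequence with $(\log|E_n|)/n \to S(\mX,\rho,T,2\eps)$, passing to the weak-$*$ limit $\mu = \mu_\eps$, and then taking $\limsup_{\eps \to 0}$ (respectively $\liminf_{\eps \to 0}$) after division by $\log(1/\eps)$ concludes both variational formulas.
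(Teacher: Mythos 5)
The paper does not prove Theorem~\ref{thm:lin_tsu_var_prin}: it is quoted verbatim from Lindenstrauss--Tsukamoto \cite{lindenstrauss_tsukamoto2017rate} and used as a black box, so there is no in-paper proof to compare against. Judged on its own, your proposal gets the easy direction right and correctly identifies the good/bad-coordinate counting as the place where tame growth enters, but the hard direction as written has two problems.

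The first is quantitative but fixable. With $\alpha$ only ``slightly larger than'' $(2\eps'/\eps)^p$, Markov gives $\beta = O\bigl((2\eps'/\eps)^p/\alpha\bigr) = O(1)$, so $\beta$ does \emph{not} tend to zero, and the term $(1-\beta)\log|E_n|$ loses a constant fraction of the exponent, which is fatal after division by $\log(1/\eps)$. You need the ratio $\alpha / (2\eps'/\eps)^p$ to diverge, which forces $\eps'/\eps \to 0$. A workable choice is $\eps' = \eps^{1+\gamma}$, $\alpha = \eps^{\gamma p/2}$, giving $\beta = O(\eps^{\gamma p/2})$; then $\alpha\log\#(\mX,\rho,\eps/4) = o(1)$ follows from tame growth with $\delta = \gamma p/2$, and the factor $\log(1/\eps')/\log(1/\eps) = 1+\gamma$ is absorbed by letting $\gamma \to 0$ at the end. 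Notice this shows the tame-growth hypothesis is used in an essential and scale-dependent way; the sentence ``$H(\alpha)/\log(1/\eps)\to 0$ automatically'' is correct, but the other error term only vanishes because $\alpha$ itself was chosen as a power of $\eps$.

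The second problem is structural and is the real gap. Your central estimate is a lower bound on $\frac{1}{n}I(X;Y)$ when $X \sim \nu_n$, i.e.\ a bound on an $n$-block rate for the (non-invariant) uniform measure on $E_n$; this is \emph{not} the dynamical rate-distortion $R_{\nu_n,p}(\eps')$, which is not even defined since $\nu_n \notin \mP_T(\mX)$. What is needed is a bound on $R_{\mu,p}(\eps',m)$ for every $m$, where $\mu$ is a weak-$*$ limit of the averaged measures $\mu_n$, and the passage from the $\nu_n$-block estimate to that is exactly the hard part. ``Lower semi-continuity of $\mu \mapsto R_{\mu,p}(\eps')$'' does not do it: even if it held, it would give $\liminf_n R_{\mu_n,p}(\eps') \geq R_{\mu,p}(\eps')$, which bounds the wrong side, and such semi-continuity is in any case delicate because of the infimum over block lengths. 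The correct mechanism is a Misiurewicz-type concatenation argument adapted to mutual information (splitting an $n$-orbit into $m$-blocks and using chain-rule/superadditivity of $I$, together with a continuity argument for partitions with $\mu$-negligible boundary). In Lindenstrauss--Tsukamoto this is done in two stages: they first prove the variational principle for the $L^\infty$ rate-distortion function, where no bad coordinates arise and no tame growth is needed, and then use tame growth only to compare $L^p$ and $L^\infty$ rate-distortion functions. Your proposal fuses those two stages, which is legitimate in principle, but then the Misiurewicz step has to be carried out explicitly; invoking ``affinity and lower semi-continuity'' papers over the crux of the argument.
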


Velozo and Velozo \cite{velozo2017rate} provided an alternative formulation in terms of Katok's entropy. For an extension of Theorem \ref{thm:lin_tsu_var_prin} to actions of countable discrete amenable groups see \cite{CDZ17}. See \cite{LinTsuDouble19} for a double (minimax) variational principle, expressing topological mean dimension as a supremum of rate-distortion dimensions of invariant measures, minimized over metrics compatible with the topology of the phase space. See also \cite{TsuDoublePot20} for an extension of this result to systems with potential.

\section{The variational principle with respect to ergodic measures}\label{sec:ergodic_measures}
A measure $\mu \in \mP_{T}(\mX)$ is called \emph{ergodic} if every $T$-invariant set (i.e. set $A$ satisfying $T^{-1}A=A$) is either of full or zero $\mu$-measure. Let $\mathcal{E}_T(\mX)$ denote the set of ergodic measures. It is known that in the variational principle for topological entropy (Theorem \ref{thm:entropy_var_prin}), it is enough to take supremum over ergodic measures (see e.g. \cite[Theorem 6.8.1]{D11}). Our first result states that this is also the case in the Lindenstrauss-Tsukamoto variational principle.
\begin{thm}
	\label{thm:ergodic_var_prin} Let $(\mX, \rho, T)$ be a topological
	dynamical system with $(\mX, \rho)$ having the tame growth of covering numbers.
	Then for $p \in [1, \infty)$
	\[\overline{\mdim}_{M}(\mX, \rho, T)= \limsup\limits_{\varepsilon\to0}\sup\limits_{\mu\in\mathcal{E}_{T}(\mathcal{X})}\frac{R_{\mu,p}(\eps)}{\log\frac{1}{\varepsilon}}.\]
	and
	\[\underline{\mdim}_{M}(\mX, \rho, T)= \liminf\limits_{\varepsilon\to0}\sup\limits_{\mu\in\mathcal{E}_{T}(\mathcal{X})}\frac{R_{\mu,p}(\eps)}{\log\frac{1}{\varepsilon}}.\]
\end{thm}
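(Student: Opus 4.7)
The plan is to reduce Theorem \ref{thm:ergodic_var_prin} to an ergodic decomposition estimate for the rate-distortion function $R_{\mu, p}(\eps)$ and then invoke Theorem \ref{thm:lin_tsu_var_prin}. Since $\mathcal{E}_T(\mX) \subseteq \mathcal{P}_T(\mX)$, the inequality $\sup_{\nu \in \mathcal{E}_T(\mX)} R_{\nu, p}(\eps) \leq \sup_{\mu \in \mathcal{P}_T(\mX)} R_{\mu, p}(\eps)$ is trivial and, combined with Theorem \ref{thm:lin_tsu_var_prin}, immediately yields the ``$\leq$'' halves of both claimed identities. For the reverse comparison it would suffice to prove, for every $\mu \in \mathcal{P}_T(\mX)$ and every $\eps > 0$, the pointwise inequality
\[ R_{\mu, p}(\eps) \leq \sup_{\nu \in \mathcal{E}_T(\mX)} R_{\nu, p}(\eps), \]
which I would in turn derive from the stronger integral estimate
\[ R_{\mu, p}(\eps) \leq \int_{\mathcal{E}_T(\mX)} R_{\nu, p}(\eps) \, d\tau(\nu), \]
where $\mu = \int \nu \, d\tau(\nu)$ is the ergodic decomposition of $\mu$.

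To prove this integral estimate I would fix $n \in \mathbb{N}$ and $\delta > 0$ and, by a measurable selection argument on $\mathcal{E}_T(\mX)$, pick for each ergodic $\nu$ a joint law of a pair $(X_\nu, Y_\nu)$, depending measurably on $\nu$, with $X_\nu \sim \nu$, average $L^p$-distortion strictly less than $\eps^p$, and $I(X_\nu; Y_\nu)/n \leq R_{\nu, p}(\eps, n) + \delta$. Defining the mixed pair $(X, Y)$ by first sampling $\nu \sim \tau$ and then $(X, Y) \sim (X_\nu, Y_\nu)$ produces a pair with $X \sim \mu$ satisfying the required distortion bound. Denoting by $C$ the ergodic component of $X$ (a measurable function of $X$ by the Birkhoff ergodic theorem, so that $H(C \mid X) = 0$), the identity $I(X; Y) = I(C; Y) + I(X; Y \mid C)$ combined with $I(X; Y \mid C) = \int I(X_\nu; Y_\nu) \, d\tau(\nu)$ gives
\[ \frac{I(X; Y)}{n} \leq \frac{I(C; Y)}{n} + \int_{\mathcal{E}_T(\mX)} R_{\nu, p}(\eps, n) \, d\tau(\nu) + \delta. \]
Sending $n \to \infty$, the second term converges to $\int R_{\nu, p}(\eps) \, d\tau(\nu)$ by dominated convergence, using the uniform bound $R_{\nu, p}(\eps, n) \leq \log \#(\mX, \rho, \eps)$ afforded by the trivial quantization code together with the subadditive convergence $R_{\nu, p}(\eps, n) \to R_{\nu, p}(\eps)$.

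The main obstacle is to force $I(C; Y)/n \to 0$ as $n \to \infty$, since $C$ ranges over the potentially uncountable Borel space $\mathcal{E}_T(\mX)$ and the decomposition $\tau$ need not be atomic. I would handle this by a two-scale construction: for each $k$ fix a finite measurable partition $\mathcal{E}_T(\mX) = A^k_1 \sqcup \dots \sqcup A^k_{m_k}$ and use a single representative code on each cell $A^k_i$, replacing $C$ by its discrete index $C_k \in \{1, \dots, m_k\}$ with $H(C_k) \leq \log m_k$; then $I(C_k; Y)/n \leq (\log m_k)/n \to 0$ as $n \to \infty$ for fixed $k$, and by refining the partition as $k \to \infty$ the extra distortion created by replacing the $\nu$-specific code by the representative can be kept negligible. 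This two-scale argument, jointly controlling the distortion and the information cost of the ergodic-component index, is the dynamical-systems analogue of the classical ergodic decomposition theorem for rate-distortion of stationary sources (cf.\ Gray, \emph{Entropy and Information Theory}) and is the technical heart of the proof.
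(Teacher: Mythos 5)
Your approach is genuinely different from the paper's, and the comparison is instructive. The paper never proves the rate-distortion integral estimate you aim for. Instead, it cites \cite[Corollary 1]{ECG94}, which gives the ergodic decomposition of the \emph{distortion-rate} function $D_{\mu,p}(R) \leq \int D_{\mu_x,p}(R)\,d\mu(x)$ (Lemma \ref{lem:distortion_rate_ergodic}), and then converts this into a \emph{pointwise} statement $\tilde{R}_{\mu,p}(D) \leq \tilde{R}_{\nu,p}(D)$ for some single ergodic $\nu$ via the monotonicity/inversion arguments of Lemma \ref{lem:distortion_rate_properties} and Lemma \ref{lem:rate_distortion_ergodic}. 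Since \cite{ECG94} is stated for stationary processes, the paper also needs the orbit-map embedding $\Phi : \mX \to \mX^\Z$ to pass between $(\mX,T)$ and a subshift, a reduction your sketch skips because you argue directly with $R_{\mu,p}$.

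The substantive issue is your two-scale argument, which is exactly the technical heart of the classical ergodic-decomposition theorem for rate-distortion functions (Gray--Davisson, Effros--Chou--Gray) and is not adequately controlled in your sketch. When you replace the $\nu$-specific channel $W_\nu$ by a representative $W_{\nu_i^*}$ on the cell $A^k_i$, the resulting distortion under $\nu$ is $\int \phi_{W_{\nu_i^*}}\,d\nu$ where $\phi_W(x) = \int d(x,y)^p\,dW(y\mid x)$; for this to be close to $\int \phi_{W_{\nu_i^*}}\,d\nu_i^*$ when $\nu$ is weak-$*$ close to $\nu_i^*$, you need $\phi_{W_{\nu_i^*}}$ to be continuous, which fails for general (near-)optimal test channels -- they are just measurable kernels. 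You either need to argue that you can restrict to channels with continuous $\phi_W$ (or with continuity sets of $\nu$-negligible boundary uniformly over the cell) without losing rate, or to control the excess distortion by another mechanism. Neither is addressed, and this is precisely what the paper sidesteps by going through the distortion-rate side, where \cite{ECG94} has already done this work. The measurable-selection step you invoke for $\nu \mapsto (X_\nu,Y_\nu)$ raises a similar concern, though it is less critical once you discretize.

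Two smaller remarks. First, the integral inequality $R_{\mu,p}(\eps) \leq \int R_{\nu,p}(\eps)\,d\tau(\nu)$ that you aim for is indeed a known corollary of the rate-distortion ergodic-decomposition theorem $R_\mu(D) = \inf\{\int R_\nu(D(\nu))\,d\tau : \int D(\nu)\,d\tau \leq D\}$ (take $D(\nu) \equiv D$), so citing that result would close the gap without the two-scale construction; but note this is a different citation than the one the paper uses. Second, the information-theoretic reduction you set up ($I(X;Y) = I(C;Y) + I(X;Y\mid C)$ with $H(C\mid X)=0$, the dominated convergence with bound $\log\#(\mX,\rho,\eps)$, and the step from the integral estimate to the sup estimate) is correct in spirit, and once the distortion-control gap is filled your route would give a self-contained proof not relying on \cite{ECG94} at all -- a genuine alternative, but at the cost of reproving a nontrivial theorem that the paper deliberately imports.
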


We decompose the proof of Theorem \ref{thm:ergodic_var_prin} into several lemmas. Our main tool will be Corollary 1 of \cite{ECG94}. Since it is stated for stationary stochastic processes with values in a given alphabet space $A$ (possibly uncountable), we will begin in the setting of shift-invariant measures. Let $(A,d)$ be a compact metric space and let $\sigma:A^\Z \to A^\Z$ be the shift transformation, i.e.
\[ \sigma((x_i)_{i\in \Z}) =(x_{i+1})_{i\in \Z}.\]
Note that the set of distributions of stationary stochastic processes with values in $A$ corresponds to the set $\mP_{\sigma}(A^\Z)$ of shift-invariant measures on $A^\Z$. Let $\pi_n : A^\Z \to A^n$ denote the projection $\pi_n((x_i)_{i \in \Z}) = (x_0, \ldots x_n)$. In this setting it is natural to consider a slight modification of the rate-distortion function from Definition \ref{def:rate_dist_LT}, more commonly used in information theory (see e.g. \cite[Chapter 9]{Gray11}).

\begin{defn}\label{def:rate_distortion_function} Let $(A,d)$ be a compact metric space and let $\mu\in  \mP_{\sigma}(A^\Z)$. For $p \in [1, \infty),\ \varepsilon>0$ and $n\in\mathbb{{N}}$
	we define the \emph{$L^p$ rate-distortion function} $\tilde{R}_{\mu,p}(n,\varepsilon)$
	as the infimum of
	\begin{equation}
	\frac{I(X;Y)}{n},\label{eq: definition of rate-distortion function}
	\end{equation}
	where $X=(X_0, ..., X_{n-1})$ and $Y=(Y_{0},\dots,Y_{n-1})$ are random variables defined on some probability space $(\Omega,\mathbb{P})$ such that

	\begin{itemize}
		\item $X=(X_0, ..., X_{n-1})$ takes values in $A^n$, and
		its law is given by $(\pi_n)_*\mu$.
		\item $Y=(Y_{0},\dots,Y_{n-1})$ takes values in $A^n$ and approximates $(X_{0},X_{1},\dots,X_{n-1})$
		in the sense that
		
		\begin{equation}\label{eq: distortion condition}
		\begin{gathered}
		\mathbb{{E}}\left(\frac{1}{n}\sum_{k=0}^{n-1}d(X_{k},Y_{k})^p\right)\leq\varepsilon^p.
		\end{gathered}
		\end{equation}
	\end{itemize}
	Again, the function $n \mapsto n\tilde{R}_{\mu, p}(n, \eps)$ is subadditive (as in \cite[Theorem 9.6.1]{Gallager68}). Hence, we can define
	\[
	\tilde{R}_{\mu,p}(\eps)=\lim_{n \to \infty}\tilde{R}_{\mu,p}(n,\varepsilon) = \inf_{n \in \N}\tilde{R}_{\mu,p}(n,\varepsilon).
	\]
	We set $\tilde{R}_{\mu}(\eps):= \tilde{R}_{\mu, 1}(\eps)$.
	
\end{defn}

\begin{rem}\label{rem:rate_distortion_properties}
	Similarly to \cite[Remark IV.3]{lindenstrauss_tsukamoto2017rate}, in Definition \ref{def:rate_distortion_function} it is enough to consider random vectors $Y$ taking only finitely many values. As $I(X;Y) \leq H(Y) < \infty$ for $Y$ taking finitely many values, we obtain that $\tilde{R}_{\mu,p}(\eps) < \infty$ for every $\eps>0$ (see also Remark \ref{rem:rd_function_finite}).
	Note that $\tilde{R}_{\mu, 1}(\eps) \leq \tilde{R}_{\mu,p}(\eps)$ for $p \in [1, \infty)$ and $\eps \geq 0$. The function $(0, \infty) \ni \eps \mapsto \tilde{R}_{\mu, p}(\eps)$ is nonnegative, nonincreasing, convex and continuous (see \cite[Lemma 9.5]{Gray11}). Moreover, $\tilde{R}_{\mu, p}(\eps) = 0$ for $\eps \geq \diam(A)$, since for such $\eps$ taking $Y \equiv const$ yields a random vector satisfying (\ref{eq: distortion condition}) and such that $I(X;Y) \leq H(Y) = 0$.  For more details on rate-distortion function see \cite{Gray11}.
\end{rem}

\begin{rem}
	Note that for the dynamical system $\mX = A^\Z,\ T = \sigma$, Definitions \ref{def:rate_dist_LT} and \ref{def:rate_distortion_function} are different, as the former requires a metric $\rho$ on the full shift space $A^\Z$, while the latter makes use of a metric $d$ on the alphabet space $A$ itself. However, for the natural choice
	\begin{equation}\label{eq:product_metric} \rho( (x_i)_{i\in \Z}, (y_i)_{i \in \Z}) = \sum \limits_{i \in \Z} \frac{d(x_i, y_i)}{2^{|i|}},
	\end{equation}
	the functions $R_{\mu}(\eps)$ (defined via $\rho$) and $\tilde{R}_{\mu}(\eps)$ (defined via $d$) are comparable (see \cite[Proposition C-B.1]{GS20mmdim_compress}), i.e. the inequality
	\[ R_{\mu, p}(14\eps) \leq \tilde{R}_{\mu, p}(\eps) \leq R_{\mu, p}(\eps) \]
	holds.
\end{rem}

Let us define now the distortion-rate function - the formal inverse of $\tilde{R}_{\mu}(\eps)$.

\begin{defn}\cite[Chapter 9]{Gray11}
	Let $(A,d)$ be a compact metric space and let $\mu \in \mP_{\sigma}(A^\Z)$. For $R>0,\ N \in \N$ and $p \in [1, \infty)$ define the \emph{$L^p$ distortion-rate function} $D_{\mu,p}(R, n)$ as the infimum of
	\[ \left(\frac{1}{n}\mathbb{E}\sum\limits_{i=0}^{n-1}d(X_{i},Y_{i})^p\right)^{\frac{1}{p}}, \]
	where $X=(X_0, ..., X_{n-1})$ and $Y=(Y_{0},\dots,Y_{n-1})$ are random variables defined on some probability space $(\Omega,\mathbb{P})$ such that
	\begin{itemize}
		\item $X=(X_0, ..., X_{n-1})$ takes values in $A^n$, and
		its law is given by $(\pi_n)_*\mu$.
		\item $Y=(Y_{0},\dots,Y_{n-1})$ takes values in $A^n$ and
		\[\frac{1}{n}I(X ; Y)\leq R.\]
	\end{itemize}
	Using the subadditivity of $n \mapsto nD_{\mu,p}(R,n)$ (see \cite[Lemma 9.2]{Gray11}), we define further
	\[D_{\mu, p}(R)= \lim\limits_{n \in \N} \ D_{\mu, p}(R, n) = \inf\limits_{n \in \N} \ D_{\mu, p}(R, n).\]
	Note that there always exists a pair of random vectors $X$ and $Y$ satisfying above conditions, as for $Y \equiv const$ it holds $\frac{1}{n}I(X;Y) \leq H(Y) = 0$.
\end{defn}
The basic properties of $D_{\mu,p}(R,n)$ and $D_{\mu,p}(R)$ are given by the following lemma.

\begin{lem}\label{lem:distortion_rate_properties}
	Let $(A,d)$ be a compact metric space and let $\mu \in \mP_{\sigma}(A^\Z)$. The functions
	\[(0, \infty) \ni R \mapsto D_{\mu,p}(R, n) \in [0, \infty)\ \text{ and }\ (0, \infty) \ni R \mapsto D_{\mu,p}(R) \in [0, \infty)\]
	are nonnegative, nonincreasing, convex, continuous and bounded by $\diam(A)$. Moreover
	\begin{equation}\label{e:distortion_rate_zero}\lim \limits_{R \to \infty}D_{\mu, p}(R) = 0
	\end{equation}
	and $D_{\mu, p}(R)$ is strictly decreasing on the open set $\{ R>0 : D_{\mu, p}(R) > 0 \}$. Its inverse on $\{ R>0 : D_{\mu, p}(R) > 0 \}$ is the function $\tilde{R}_{\mu, p}$, i.e. for $D_0>0,\ R_0 >0$ we have $D_{\mu, p}(R_0) = D_0$ if and only if $\tilde{R}_{\mu, p}(D_0) = R_0$.
\end{lem}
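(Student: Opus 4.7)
I would treat the finite-$n$ function $D_{\mu,p}(R,n)$ first, pass to $D_{\mu,p}(R)=\inf_n D_{\mu,p}(R,n)$, and derive the remaining claims from the inverse relationship with $\tilde{R}_{\mu,p}$.

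The elementary properties follow directly from the definition. Nonnegativity of $D_{\mu,p}(R,n)$ is obvious. Monotonicity in $R$ holds because enlarging the constraint $\frac{1}{n}I(X;Y)\le R$ enlarges the set of admissible pairs $(X,Y)$, and hence can only decrease the infimum. The upper bound $\diam(A)$ is witnessed by the deterministic choice $Y_i\equiv c$ for a fixed $c\in A$, which yields $I(X;Y)=0\le nR$ and $L^p$-distortion at most $\diam(A)$. All three properties pass to the infimum over $n$, giving them also for $D_{\mu,p}(R)$.

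For (\ref{e:distortion_rate_zero}) I would use compactness of $A$ to quantize. Given $\eta>0$, partition $A$ into finitely many Borel sets $B_1,\dots,B_k$, each of diameter at most $\eta$, pick representatives $c_j\in B_j$, and set $Y_i=c_j$ whenever $X_i\in B_j$. Then $d(X_i,Y_i)\le \eta$ pointwise, so the $L^p$-distortion is at most $\eta$, while $\frac{1}{n}I(X;Y)\le \frac{1}{n}H(Y)\le \log k$. Hence $D_{\mu,p}(\log k)\le D_{\mu,p}(\log k,n)\le \eta$, and letting $\eta\to 0$ (so $k\to\infty$) yields the claim.

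The core step is to establish the inverse relationship: for $R_0,D_0>0$,
\[ D_{\mu,p}(R_0)=D_0 \iff \tilde{R}_{\mu,p}(D_0)=R_0. \]
I would prove this by direct comparison of the two optimization problems: any pair $(X,Y)$ admissible for one is admissible for the other, with the roles of rate and distortion exchanged; to convert strict inequalities between values of $\tilde{R}_{\mu,p}$ and $D_{\mu,p}$ to non-strict ones (and back) one invokes the continuity of $\tilde{R}_{\mu,p}$ recorded in Remark \ref{rem:rate_distortion_properties}. Once this inverse relationship is in place, convexity, continuity and strict monotonicity of $D_{\mu,p}$ on $\{R>0:D_{\mu,p}(R)>0\}$ follow from the corresponding properties of $\tilde{R}_{\mu,p}$ via the general fact that the inverse of a convex, continuous, strictly decreasing function is again convex, continuous and strictly decreasing. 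The finite-$n$ statements for $D_{\mu,p}(R,n)$ are obtained identically from the finite-$n$ counterpart of $\tilde{R}_{\mu,p}$. The only mildly subtle point is strict monotonicity of $\tilde{R}_{\mu,p}$ on its positive support: this follows from convexity together with $\tilde{R}_{\mu,p}(\eps)=0$ for $\eps\ge \diam(A)$, since a convex, nonincreasing function that is positive somewhere and vanishes from $\diam(A)$ onward cannot be constant on any subinterval of its positive support.

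The main obstacle is making the inverse relationship rigorous, because both $\tilde{R}_{\mu,p}$ and $D_{\mu,p}$ are defined as infima over $n$ as well as over admissible joint distributions, so that strict versus non-strict inequalities must be interchanged with care via continuity. This is essentially the standard argument of \cite[Chapter 9]{Gray11}.
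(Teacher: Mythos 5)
Your approach is correct but genuinely reverses the paper's logical order. The paper proves the convexity, continuity and nonincreasing character of $D_{\mu,p}$ by citing \cite[Lemma 9.1]{Gray11} directly, deduces strict monotonicity of $D_{\mu,p}$ on its positive support from convexity together with $\lim_{R\to\infty}D_{\mu,p}(R)=0$, and then uses that strict monotonicity of $D_{\mu,p}$ (for one implication) and the strict monotonicity of $\tilde{R}_{\mu,p}$ (for the converse) to establish the inverse relationship. You instead rely on only the properties of $\tilde{R}_{\mu,p}$ from Remark \ref{rem:rate_distortion_properties}, prove the inverse relationship first, and then transport convexity, continuity and strict monotonicity back to $D_{\mu,p}$ through the inverse. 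This works: continuity (rather than strict monotonicity) of $\tilde{R}_{\mu,p}$ is what you need to pass between strict and non-strict inequalities in the admissibility arguments, and your derivation of strict monotonicity of $\tilde{R}_{\mu,p}$ from convexity and $\tilde{R}_{\mu,p}(\eps)=0$ for $\eps\ge\diam(A)$ is sound. What this buys you is one fewer appeal to \cite{Gray11} (you only need the properties of $\tilde{R}_{\mu,p}$, not also Lemma 9.1 for $D_{\mu,p}$) at the price of a slightly longer argument. One point you should not gloss over: the inverse-function argument only gives convexity and continuity of $D_{\mu,p}$ on $\{R>0:D_{\mu,p}(R)>0\}$, whereas the lemma asserts these on all of $(0,\infty)$; you must also check that $D_{\mu,p}$ approaches $0$ as $R$ tends to the right endpoint of the positive support, so that there is no jump discontinuity where the function hits zero. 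That follows from strict monotonicity of $\tilde{R}_{\mu,p}$ (a positive limit at the endpoint would force $\tilde{R}_{\mu,p}$ to be constant and positive on an interval near zero), but it deserves an explicit sentence.
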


Most of the properties stated in the above lemma follow from \cite[Lemma 9.1]{Gray11}. For the convenience of the reader, we include the proof of the remaining points.

\begin{proof}
	Nonnegativity, convexity, continuity and nonincreasing follow from \cite[Lemma 9.1]{Gray11}. For the upper bound note that
	\[ \frac{1}{n}\mathbb{E}\sum\limits_{i=0}^{n-1}d(X_{i},Y_{i})^p \leq \diam(A)^p. \]
	For (\ref{e:distortion_rate_zero}), fix $\eps>0$ and let $P$ be a finite Borel partition of $A$ with diameter smaller than $\eps$. For each $B \in P$ choose a unique point $x_B \in B$ and let $f : A \to A$ be given by $f(x)=x_B$ if $x \in B$. Then for any $n \in \N$ and a random vector $X=(X_0, ..., X_{n-1})$ such that $X\sim(\pi_n)_*\mu$ and $Y = (Y_0, ..., Y_{n-1})$ given by $Y_i= f \circ X_i,\ i = 0, ..., n-1$ we have \[\frac{1}{n}\mathbb{E}\sum\limits_{i=0}^{n-1}d(X_{i},Y_{i})^p \leq \eps^p \text{ and }\frac{1}{n}I(X ; Y)\leq \frac{1}{n}H(Y) < \infty,\]
	hence for $R \geq \frac{1}{n}H(Y)$ it holds $D_{\mu, p}(R) \leq D_{\mu, p}(\frac{1}{n}H(Y)) \leq D_{\mu, p}(\frac{1}{n}H(Y),n) \leq \eps$. As $\eps>0$ was arbitrary, (\ref{e:distortion_rate_zero}) is proved. Convexity of $D_{\mu, p}$ together with $\lim \limits_{R \to \infty}D_{\mu,p}(R) = 0$ imply that $D_{\mu,p}(\cdot)$ is strictly decreasing on the open set $\{ R>0 : D_{\mu,p}(R) > 0 \}$. Let us show now that if for $D_0>0,\ R_0 >0$ we have $D_{\mu, p}(R_0) = D_0$, then $\tilde{R}_{\mu,p}(D_0) = R_0$. First, we show the inequality $\tilde{R}_{\mu,p}(D_0) \leq R_0$. By the strict monotonicity of $D_{\mu, p}(\cdot)$ we have $D_{\mu,p}(R_0 + \eta) < D_0$ for every $\eta > 0$. Consequently, for every $\eta > 0$ there exist $\xi > 0,\ n \in \N$ and random vectors  $X=(X_0, ..., X_{n-1}),\ Y=(Y_0, ..., Y_{n-1})$ such that
	\[X \sim (\pi_n)_* \mu,\ \frac{1}{n}I(X ; Y)\leq R_0 + \eta \text{ and } \frac{1}{n}\mathbb{E}\sum\limits_{i=0}^{n-1}d(X_{i},Y_{i})^p \leq (D_0 - \xi)^p.\]
	Therefore $\tilde{R}_{\mu,p}(D_0) \leq \tilde{R}_{\mu,p}(D_0 - \xi) \leq R_0 + \eta$. For the other direction assume that $\tilde{R}_{\mu,p}(D_0) < R_0$. Then there exists $\eta > 0,\ n \in \N$ and random vectors  $X=(X_0, ..., X_{n-1}),\ Y=(Y_0, ..., Y_{n-1})$ such that
	\[X \sim (\pi_n)_* \mu,\ \frac{1}{n}I(X ; Y)\leq R_0 - \eta \text{ and } \frac{1}{n}\mathbb{E}\sum\limits_{i=0}^{n-1}d(X_{i},Y_{i})^p \leq D_0^p.\]
	Consequently $D_{\mu, p}(R_0 - \eta) \leq D_0$, what contradicts the strict monotonicity of $D_{\mu,p}(\cdot)$. The opposite implication follows similarly (see Remark \ref{rem:rate_distortion_properties} for the properties of $\tilde{R}_{\mu, p}$).
\end{proof}

\begin{lem}\label{lem:distortion_rate_ergodic}\cite[Corollary 1]{ECG94}
	Let $(A,d)$ be a compact metric space and let $\mu \in \mP_{\sigma}(A^\Z)$. Let $\{ \mu_x : x \in A^\Z \}$ be the ergodic decomposition of the measure $\mu$. Then for $R \geq 0$
	\[ D_{\mu, p}(R) = \inf \bigg\{ \int \limits_{A^\Z} D_{\mu_x, p}(S(x))d\mu(x)\ \Big|\ S : A^\Z \to [0, \infty) \text{ is Borel measurable} \]
	\[ \text { and } \int \limits_{A^\Z} S(x)d\mu(x) \leq R \bigg\}. \]
	In particular
	\[ D_{\mu, p}(R) \leq \int \limits_{A^\Z}  D_{\mu_x, p}(R) d\mu(x). \]
\end{lem}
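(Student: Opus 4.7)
The plan is to dispose of the ``in particular'' statement by simply plugging the constant function $S\equiv R$ into the infimum on the right, so the real content is the main equality, which I would treat as two inclusions; this is \cite[Corollary 1]{ECG94}, so what follows just indicates the mechanism.

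For the direction ``$\geq$'', I would fix $n\in\N$ and a joint law $\nu$ on $A^n\times A^n$ with $X$-marginal $(\pi_n)_*\mu$ and $\tfrac{1}{n}I_\nu(X;Y)\leq R$, then extend $\nu$ to $A^\Z\times A^n$ so that its first marginal is $\mu$ and $Y$ depends on the full sequence only through $(X_0,\dots,X_{n-1})$. Disintegrating against the $\sigma$-algebra $\mathcal{I}$ of $\sigma$-invariant subsets of $A^\Z$ will produce, for $\mu$-a.e.\ $\omega$, a coupling $\nu_\omega$ whose $X$-marginal is $(\pi_n)_*\mu_\omega$. Setting $S(\omega):=\tfrac{1}{n}I_{\nu_\omega}(X;Y)$ and using that $Y$ is conditionally independent of $\mathcal{I}$ given $X$, the chain rule for mutual information will give $\int S\,d\mu = \tfrac{1}{n}I_\nu(X;Y\mid\mathcal{I})\leq R$. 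The tower property identifies $\tfrac{1}{n}\mathbb{E}_\nu\sum_{i<n}d(X_i,Y_i)^p$ with $\int \tfrac{1}{n}\mathbb{E}_{\nu_\omega}\sum_{i<n}d(X_i,Y_i)^p\,d\mu$, which is bounded below by $\int D_{\mu_\omega,p}(S(\omega),n)^p\,d\mu$; raising to the power $1/p$ and applying Jensen's inequality for the probability $\mu$ and the convex map $t\mapsto t^p$ then bounds this by $\int D_{\mu_\omega,p}(S(\omega),n)\,d\mu\geq \int D_{\mu_\omega,p}(S(\omega))\,d\mu$. Taking the infimum over $\nu$, letting $n\to\infty$, and using continuity from Lemma~\ref{lem:distortion_rate_properties} will finish this direction.

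For the reverse inclusion, I would fix an admissible $S$ and $\eta>0$ and, via a measurable selection, pick for each ergodic component $\mu_\omega$ a block length and a coupling $\nu_\omega$ with $X$-marginal $(\pi_n)_*\mu_\omega$, information rate at most $S(\omega)+\eta$, and distortion within $\eta$ of $D_{\mu_\omega,p}(S(\omega))$. Quantizing $\omega\mapsto \nu_\omega$ to finitely many $\mu$-positive classes permits a common block length $n$, and gluing yields $\nu=\int \nu_\omega\,d\mu$ on $A^n\times A^n$ with $X$-marginal $(\pi_n)_*\mu$, distortion at most $\int D_{\mu_\omega,p}(S(\omega))\,d\mu+\eta$, and mutual-information rate at most $\int S\,d\mu+\eta+o(1)$ as $n\to\infty$, the $o(1)$ absorbing the cost of exposing the finitely-valued index. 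Letting $\eta\to 0$ and invoking continuity of $D_{\mu,p}$ from Lemma~\ref{lem:distortion_rate_properties} will close the argument.

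The hard part will be the reverse inclusion: naive mixing over the ergodic components can inflate $I_\nu(X;Y)$ by $H(\mathcal{I})$, which is typically infinite in the nonergodic case, so the argument has to reduce to a finite measurable quantization of the ergodic decomposition and then use block-length amplification to make the quantization cost negligible. This finite-then-amplify trick is the technical heart of \cite[Corollary 1]{ECG94}, and also relies on a Borel selection theorem to guarantee that the near-optimizers $\omega\mapsto \nu_\omega$ can be chosen measurably.
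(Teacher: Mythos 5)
Your proof matches the paper's: the paper derives the ``in particular'' bound in exactly the same way, by substituting the constant function $S\equiv R$, and attributes the main equality directly to \cite[Corollary 1]{ECG94} without reproducing its proof. Your additional sketch of the mechanism behind ECG94 (disintegration against the invariant $\sigma$-algebra, chain rule and Jensen for one direction, finite quantization plus block-length amplification for the other) is consistent with the cited source but goes beyond what the paper itself writes down.
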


\begin{proof}
	The equality is the content of \cite[Corollary 1]{ECG94}. Applying it to the constant function $S(x) \equiv R$, we obtain $D_{\mu, p}(R) \leq \int \limits_{A^\Z}  D_{\mu_x, p}(R) d\mu(x)$.
\end{proof}

By a \emph{subshift} we will understand a subset $\mS \subset A^\Z$ which is closed in the product topology and shift-invariant. Note that $(\mS, \rho, \sigma)$ is itself a topological dynamical system with the metric $\rho$ defined in (\ref{eq:product_metric}), which metrizes the product topology on $A^\Z$.

\begin{lem}\label{lem:rate_distortion_ergodic}
	Let $(A,d)$ be a compact metric space, let $\mS \subset A^\Z$ be a subshift and let $\mu \in \mP_{\sigma}(\mS)$. For every $p \in [1, \infty)$ and $D > 0$ there exists an ergodic measure $\nu \in \mE_{\sigma}(\mS)$ such that $\tilde{R}_{\mu, p}(D) \leq \tilde{R}_{\nu, p}(D)$.
\end{lem}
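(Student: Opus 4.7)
The plan is to combine the ergodic decomposition formula for the distortion-rate function provided by Lemma \ref{lem:distortion_rate_ergodic} with the inverse relationship between $D_{\mu,p}$ and $\tilde R_{\mu,p}$ from Lemma \ref{lem:distortion_rate_properties}. The point is that ergodic decomposition is available on the distortion-rate side, so I would transport the problem there, locate a good ergodic component, and then invert back.

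Set $R_0 := \tilde R_{\mu,p}(D)$. If $R_0 = 0$, then any $\nu \in \mE_\sigma(\mS)$ trivially satisfies $\tilde R_{\nu,p}(D) \geq 0 = R_0$ (and $\mE_\sigma(\mS)$ is nonempty by Krylov--Bogolyubov since $\mS$ is compact), so I would reduce to the case $R_0 > 0$. Since also $D > 0$, Lemma \ref{lem:distortion_rate_properties} yields $D_{\mu,p}(R_0) = D$.

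Next, let $\{\mu_x\}_{x \in A^{\Z}}$ denote the ergodic decomposition of $\mu$. As $\mS$ is closed and shift-invariant with $\mu(\mS) = 1$, the standard properties of ergodic decomposition give $\mu_x \in \mE_\sigma(\mS)$ for $\mu$-a.e. $x$. Applying the ``in particular'' clause of Lemma \ref{lem:distortion_rate_ergodic} with $R = R_0$ yields
\[ D \;=\; D_{\mu,p}(R_0) \;\leq\; \int_{A^{\Z}} D_{\mu_x,p}(R_0)\, d\mu(x). \]
This integral bound forces the set $E := \{x : D_{\mu_x,p}(R_0) \geq D\}$ to have positive $\mu$-measure, since otherwise $D_{\mu_x,p}(R_0) < D$ $\mu$-a.e. would make the integral strictly smaller than $D$. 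Pick any $x \in E$ for which $\mu_x$ is additionally ergodic and supported on $\mS$, and set $\nu := \mu_x$.

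It then remains to invert the inequality. Writing $D_1 := D_{\nu,p}(R_0)$, we have $D_1 \geq D > 0$, so Lemma \ref{lem:distortion_rate_properties} applied to $\nu$ gives $\tilde R_{\nu,p}(D_1) = R_0$. Using that $\tilde R_{\nu,p}$ is nonincreasing and $D \leq D_1$, I would conclude
\[ \tilde R_{\nu,p}(D) \;\geq\; \tilde R_{\nu,p}(D_1) \;=\; R_0 \;=\; \tilde R_{\mu,p}(D), \]
as desired. The only mildly delicate step is the positive-measure selection of $x$ satisfying $D_{\mu_x,p}(R_0) \geq D$ simultaneously with $\mu_x \in \mE_\sigma(\mS)$; measurability of $x \mapsto D_{\mu_x,p}(R_0)$ is already built into the statement of Lemma \ref{lem:distortion_rate_ergodic}, so the argument reduces to the routine observation that a strict a.e.\ upper bound by $D$ would contradict the integral lower bound $D$.
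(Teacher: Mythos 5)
Your proof is correct and follows essentially the same route as the paper's: dispose of the trivial case $\tilde R_{\mu,p}(D)=0$, pass to the distortion-rate side via Lemma~\ref{lem:distortion_rate_properties}, apply the ergodic decomposition bound of Lemma~\ref{lem:distortion_rate_ergodic} to find an ergodic component with $D_{\mu_x,p}(R_0)\geq D$, and invert back. The only cosmetic difference is in the final inversion, where the paper finds $\tilde R\geq R_0$ with $D_{\mu_x,p}(\tilde R)=D$ by continuity of $D_{\mu_x,p}$, while you invoke monotonicity of $\tilde R_{\nu,p}$ directly; the two are interchangeable given Lemma~\ref{lem:distortion_rate_properties}.
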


\begin{proof}
	Fix $D > 0$. Assume first that for each $R>0$ we have $D_{\mu, p}(R) < D$. By the definition of $D_{\mu, p}(R)$, this means that for every $R > 0$ there exist $n \in \N$ and random vectors $X=(X_0, ..., X_{n-1}),\ Y=(Y_0, ..., Y_{n-1})$ such that
	\[X \sim (\pi_n)_* \mu,\ \frac{1}{n}I(X ; Y)\leq R \text{ and } \frac{1}{n}\mathbb{E}\sum\limits_{i=0}^{n-1}d(X_{i},Y_{i})^p \leq D^p.\]
	Therefore $R_{\mu, p}(D) = 0$ and there is nothing to prove. Assume now that there exists $R_0>0$ such that $D_{\mu, p}(R_0) = D$. By Lemma \ref{lem:distortion_rate_properties} we have $R_{\mu, p}(D) = R_0$. By Lemma \ref{lem:distortion_rate_ergodic} we get
	\[D_{\mu, p}(R_{0})\leq\int\limits_{\mathcal{X}}D_{\mu_x,p}(R_0)d\mu(x),\]
	where $\{ \mu_x : x \in \mS  \}$ is the ergodic decomposition of $\mu$.
	In particular, there exists $x\in\mS$ such that
	$D = D_{\mu, p}(R_{0})\leq D_{\mu_x, p}(R_0)$. By continuity and monotonicity there exists $\tilde{R} > 0$ such that $\tilde{R} \geq R_0$ and $D_{\mu_x, p}(\tilde{R}) = D$. Therefore $R_{\mu_x, p}(D) = \tilde{R} \geq R_0 = R_{\mu, p}(D)$. Since $\mu_x$ arises from the ergodic decomposition of $\mu$, we can assume $\supp(\mu_x) \subset \supp(\mu)\subset \mS$.
\end{proof}

\begin{proof} \emph{(of Theorem \ref{thm:ergodic_var_prin}).}
	In order to prove the theorem, it suffices to show that for every $\eps>0$ and every $\mu \in \mP_{T}(\mX)$ there exists $\nu \in \mE_T(\mX)$ such that $R_{\mu,p}(\eps) \leq R_{\nu,p}(\eps)$. By Lemma \ref{lem:rate_distortion_ergodic}, this is the case for subshifts $\mS \subset A^\Z$ for the rate-distortion function $\tilde{R}_{\mu,p}$ from Definition \ref{def:rate_distortion_function}. We will extend this to the case of any topological dynamical system $(\mX, \rho, T)$ for the rate-distortion function $R_{\mu,p}$ from Definition \ref{def:rate_dist_LT}.\\
	Define the orbit map $\Phi : \mX \to \mX^\Z$ as $\Phi(x) = (T^n x)_{n \in \Z}$. It provides an equivariant homeomorphism between $(\mX, T)$ and $(\Phi(\mX), \sigma)$, where $\mX^\Z$ is considered with the product topology.
	Fix an ergodic measure $\mu \in \mE_{T}(\mX)$. Then the push-forward measure $\Phi_*\mu$ belongs to $\mE_{\sigma}(\Phi(\mX))$. Moreover, we see from Definitions \ref{def:rate_dist_LT}  and \ref{def:rate_distortion_function} that for $\eps>0$ we have
	\[ R_{\mu,p}(\eps) = \tilde{R}_{\Phi_* \mu,p}(\eps), \]
	where for $R_{\mu,p}(\eps)$ we treat $\rho$ as a metric on the phase space $\mX$ and for  $\tilde{R}_{\Phi_* \mu,p}(\eps)$ we treat $\rho$ as a metric on the alphabet space $\mX$ of the subshift $\Phi(\mX) \subset \mX^\Z$. According to the Lemma \ref{lem:rate_distortion_ergodic}, there exists a measure $\tilde{\nu} \in \mE_{\sigma}(\Phi(\mX))$ with
	\[ \tilde{R}_{\Phi_* \mu,p}(\eps) \leq \tilde{R}_{\tilde{\nu},p}(\eps).  \]
	Since $\supp(\tilde{\nu}) \subset \Phi(\mX)$, we can consider its push-back $\nu := (\Phi^{-1})_* \tilde{\nu}$, which satisfies
	\[ R_{\nu, p}(\eps) = \tilde{R}_{\tilde{\nu},p}(\eps) \geq \tilde{R}_{\Phi_* \mu,p}(\eps) = R_{\mu,p}(\eps).  \]
	Observing that $\nu \in \mE_{T}(\mX)$ concludes the proof.
\end{proof}

Theorem \ref{thm:ergodic_var_prin} was essentially proved in \cite[Proof of Proposition 7]{velozo2017rate}, although not explicitly
stated. Our approach is more direct, as it avoids the use of Katok's entropy (yet it is similar in spirit).

\begin{problem}
	Lindenstrauss and Tsukamoto proved a version of variational principle without the assumption of the tame growth of covering numbers, in terms of the $L^\infty$ rate-distortion function. Is Theorem \ref{thm:ergodic_var_prin} true also in this case?
\end{problem}

\section{R\'enyi information dimension and metric mean dimension}\label{sec:mrid}

Pesin defined the (upper) R\'{e}nyi information dimension of a Borel measure on a metric space $(\mX, \rho)$ as
\begin{equation}\label{eq:rid_pesin} \overline{\mathrm{RID}}(\mu) = \limsup \limits_{\eps \to 0} \frac{1}{\log \frac{1}{\eps}} \inf_{\diam(P)\leq\eps}H_{\mu}(P),
\end{equation}
where the infimum is taken over all Borel partitions of $\mX$ of diameter at most $\eps$, see \cite[p. 186]{pesin2008dimension} (see also \cite{renyi1959dimension} for the original definition in the context of random variables). Inspired by this definition and Theorem \ref{thm:lin_tsu_var_prin}, we prove the following version of the variational principle for metric mean dimension, given directly in terms of measure-theoretic entropy. Note that
the assumption of tame growth of covering numbers is not required.

\begin{thm}\label{thm:mrid_mdim} Let $(\mX, \rho, T)$ be a topological dynamical system. Then 
	\begin{equation}\label{eq:mrid_def_up}  \ummdim(\mX, \rho, T) = \limsup_{\eps\rightarrow0}\frac{1}{\log \frac{1}{\eps}}\ \sup_{\mu\in \mP_{T}(\mathcal{X})}\ \inf_{\diam(P)\leq\eps}h_{\mu}(P)\end{equation}
	and
	\begin{equation}\label{eq:mrid_def_low} \lmmdim(\mX, \rho, T) = \liminf_{\eps\rightarrow0}\frac{1}{\log\frac{1}{\eps}}\ \sup_{\mu\in \mP_{T}(\mathcal{X})}\ \inf_{\diam(P)\leq\eps}h_{\mu}(P),
	\end{equation}
	where the infima are taken over all Borel partitions $P$ of $\mX$ with diameter at most $\eps$.
\end{thm}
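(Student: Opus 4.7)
The plan is to sandwich
\[ \mathcal{E}(\eps) := \sup_{\mu\in\mP_T(\mX)}\inf_{\diam P\leq\eps}h_\mu(P) \]
between $S(\mX,\rho,T,\cdot)$ evaluated at scales proportional to $\eps$, then divide by $\log(1/\eps)$ and take $\limsup$ (resp.\ $\liminf$); the multiplicative constants wash out because $\log(1/(C\eps))/\log(1/\eps)\to 1$ as $\eps\to 0$ for every fixed $C>0$. The engine is the local variational principle of Romagnoli / Glasner--Weiss / Huang--Ye: for any finite open cover $\mathcal{U}$ of $\mX$,
\[ \htop(\mathcal{U}):=\lim_{n\to\infty}\frac{1}{n}\log N\Bigl(\bigvee_{i=0}^{n-1}T^{-i}\mathcal{U}\Bigr)=\sup_{\mu\in\mP_T(\mX)}\inf_{P\succeq\mathcal{U}}h_\mu(P), \]
where $N(\cdot)$ denotes minimum subcover cardinality and $P\succeq\mathcal{U}$ means $P$ is a finite Borel partition refining $\mathcal{U}$ (each atom lies in some cover element).

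Given $\eps>0$, compactness of $\mX$ yields two finite open covers: $\mathcal{U}_\eps$ with $\diam\mathcal{U}_\eps\leq\eps$ and Lebesgue number at least $c\eps$ (e.g.\ balls of radius $\eps/2$ around a sufficiently fine net), and $\mathcal{U}'_\eps$ with Lebesgue number at least $\eps$ and $\diam\mathcal{U}'_\eps\leq C\eps$ (e.g.\ balls of radius $2\eps$ around an $\eps$-net, giving $C=4$). For the upper bound, since $\diam\mathcal{U}_\eps\leq\eps$, any partition refining $\mathcal{U}_\eps$ has diameter $\leq\eps$, so $\inf_{\diam P\leq\eps}h_\mu(P)\leq\inf_{P\succeq\mathcal{U}_\eps}h_\mu(P)$, and the local VP yields $\mathcal{E}(\eps)\leq\htop(\mathcal{U}_\eps)$. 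A Lebesgue-number argument bounds the latter: if $\mathcal{V}_n$ is a minimal open cover of $\rho_n$-diameter $\leq c\eps$, then each $V\in\mathcal{V}_n$ satisfies $\diam_\rho(T^iV)\leq c\eps$, so $T^iV\subset U_i$ for some $U_i\in\mathcal{U}_\eps$ and $V\subset\bigcap_i T^{-i}U_i\in\bigvee T^{-i}\mathcal{U}_\eps$; selecting one such atom per $V$ gives a subcover of cardinality $\leq\#(\mX,\rho_n,c\eps)$, hence $\htop(\mathcal{U}_\eps)\leq S(\mX,\rho,T,c\eps)$.

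For the lower bound, the Lebesgue-number property of $\mathcal{U}'_\eps$ forces every partition of diameter $\leq\eps$ to refine $\mathcal{U}'_\eps$, so $\inf_{\diam P\leq\eps}h_\mu(P)\geq\inf_{P\succeq\mathcal{U}'_\eps}h_\mu(P)$, and the local VP gives $\mathcal{E}(\eps)\geq\htop(\mathcal{U}'_\eps)$. Since every atom of $\bigvee T^{-i}\mathcal{U}'_\eps$ has $\rho_n$-diameter $\leq C\eps$, any subcover has cardinality $\geq\#(\mX,\rho_n,C\eps)$, so $\htop(\mathcal{U}'_\eps)\geq S(\mX,\rho,T,C\eps)$. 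Combining,
\[ S(\mX,\rho,T,C\eps)\leq\mathcal{E}(\eps)\leq S(\mX,\rho,T,c\eps), \]
and dividing by $\log(1/\eps)$ and taking $\limsup$/$\liminf$ finishes the proof.

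The main obstacle is the invocation of the nontrivial local variational principle in both directions; this is precisely what allows one to bypass the tame-growth-of-covering-numbers hypothesis of Theorem~\ref{thm:lin_tsu_var_prin}. Once it is in place, the Lebesgue-number argument provides a clean scale-matching (at constant ratio) between partition diameters and the covering numbers $\#(\mX,\rho_n,\cdot)$, and the unified sandwich handles the $\ummdim$ and $\lmmdim$ statements simultaneously.
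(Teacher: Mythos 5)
Your proof is correct and follows essentially the same route as the paper: both rely on the Glasner--Weiss local variational principle for a fixed open cover, match partition diameters to cover diameters and Lebesgue numbers on both sides, and conclude with the observation that the constant multiplicative change of scale disappears under $\log(1/\eps)$ normalization. The only cosmetic difference is that the paper constructs a single cover with $\diam\leq\eps$ and $\Leb\geq\eps/4$ and uses it for both inequalities (compare Lemmas \ref{lem:lebesgue_number} and \ref{lem:cover_res_entropies}), while you build two separate covers tailored to each direction.
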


The proof is based on the version of the local variational principle for the entropy of a fixed open cover, conjectured by Romagnoli in \cite{Romagnoli03} and proved by Glasner and Weiss in \cite{GW06}.

\begin{defn}
	Let $(\mX, \rho, T)$ be a topological dynamical system and let $\mU$ be a finite open cover of $\mX$. The \emph{topological entropy of $\mU$} is
	\[ h_{\mathrm{top}}(\mU, T) = \lim \limits_{n \to \infty} \frac{1}{n} \log N(\mU^n), \]
	where $\mU^n = \mU \vee T^{-1}\mU \vee \ldots \vee T^{-(n-1)}\mU$ and $N(\mathcal{V})$ denotes the minimal cardinality of a subcover of a cover $\mathcal{V}$.
\end{defn}

\begin{thm}\cite[Theorem 7.11]{GW06}\label{thm:local_var_prin}
	Let $(\mX, \rho, T)$ be a topological dynamical system and let $\mU$ be a finite open cover of $\mX$. Then
	\[ \sup \limits_{\mu \in \mP_T(\mX)} \inf \limits_{P \succcurlyeq \mU} h_{\mu}(P) = h_{\mathrm{top}}(\mU, T), \]
	where the infimum is taken over all finite Borel partitions $P$ of $\mX$ which refine $\mU$ (i.e. every $A \in P$ is contained in an element of $\mU$).
\end{thm}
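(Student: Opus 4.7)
The plan is to sandwich
\[ F(\eps) := \sup_{\mu \in \mP_{T}(\mX)} \inf_{\diam(P) \leq \eps} h_\mu(P, T) \]
between values of $S(\mX,\rho,T,\cdot)$ at scales comparable to $\eps$ and then pass to $\limsup$/$\liminf$ as $\eps \to 0$. The bridge between the two sides is the topological entropy of a finite open cover: Theorem \ref{thm:local_var_prin} links $\htop(\mU, T)$ to measure-theoretic entropies taken over refinements of $\mU$, while the iterated cover $\mU^n := \bigvee_{k=0}^{n-1} T^{-k}\mU$ is directly comparable to $\#(\mX,\rho_n,\cdot)$.

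The combinatorial device is a ball cover: for $0 < s < r$, take a maximal $s$-separated set $F_s \subset \mX$ and set $\mU_{r,s} := \{B(x,r) : x \in F_s\}$; by maximality $F_s$ is an $s$-net, so $\mU_{r,s}$ covers $\mX$ with $\diam \leq 2r$ and Lebesgue number at least $r-s$. From this I would extract two facts: (a) if $\diam(\mU) \leq \eps$ then every element of $\mU^n$ has $\rho_n$-diameter $\leq \eps$, hence $N(\mU^n) \geq \#(\mX,\rho_n,\eps)$ and $\htop(\mU,T) \geq S(\mX,\rho,T,\eps)$; (b) if $\mU$ has Lebesgue number $\geq \delta$, then $\mU^n$ inherits Lebesgue number $\geq \delta$ with respect to $\rho_n$ (a set of $\rho_n$-diameter $\leq \delta$ has each iterate $T^k A$ of $\rho$-diameter $\leq \delta$, so $T^k A \subset U_k$ for some $U_k \in \mU$, whence $A \subset \bigcap_k T^{-k} U_k \in \mU^n$), so any optimal open cover witnessing $\#(\mX,\rho_n,\delta)$ refines $\mU^n$, giving $\htop(\mU,T) \leq S(\mX,\rho,T,\delta)$.

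For the upper bound I would fix $s' \in (0,\eps/2)$ and take $\mU = \mU_{\eps/2,\,\eps/2 - s'}$, with $\diam \leq \eps$ and Lebesgue $\geq s'$. Since $\{P \succcurlyeq \mU\} \subset \{P : \diam(P) \leq \eps\}$, the inner infimum in the local variational principle dominates the one defining $F(\eps)$ for every $\mu$, so Theorem \ref{thm:local_var_prin} together with (b) yields $F(\eps) \leq \htop(\mU,T) \leq S(\mX,\rho,T,s')$. For the matching lower bound I would fix $\eta > 0$ and take $\mU' = \mU_{\eps+\eta,\,\eta/2}$, with $\diam \leq 2\eps + 2\eta$ and Lebesgue $\geq \eps + \eta/2 > \eps$; by the Lebesgue property, every partition $P$ with $\diam(P) \leq \eps$ refines $\mU'$, so $\{P : \diam(P) \leq \eps\} \subset \{P \succcurlyeq \mU'\}$ and Theorem \ref{thm:local_var_prin} with (a) yields $F(\eps) \geq \htop(\mU',T) \geq S(\mX,\rho,T,2\eps + 2\eta)$.

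Combining gives $S(\mX,\rho,T,2\eps+2\eta) \leq F(\eps) \leq S(\mX,\rho,T,s')$ for all $\eta > 0$ and $s' \in (0,\eps/2)$. Choosing $\eta(\eps) \to 0$ and $s'(\eps) = \eps/4$ and dividing by $\log(1/\eps)$, both $\log(1/(2\eps+2\eta))/\log(1/\eps)$ and $\log(1/s'(\eps))/\log(1/\eps)$ tend to $1$, so the $\limsup$ and $\liminf$ of $F(\eps)/\log(1/\eps)$ coincide with those of $S(\mX,\rho,T,\eps)/\log(1/\eps)$, namely $\ummdim(\mX,\rho,T)$ and $\lmmdim(\mX,\rho,T)$. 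The main technical point I expect to wrestle with is the Lebesgue-number transfer in (b) and the $\leq$-versus-$<$ boundary issue in the Lebesgue definition, both absorbed by the auxiliary parameters $\eta$ and $s'$; notably, no tame growth of covering numbers hypothesis enters the argument, which is why Theorem \ref{thm:mrid_mdim} holds in full generality.
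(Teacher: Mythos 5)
There is a genuine gap here, and it is structural: you have not proved the statement at all. The statement in question is the local variational principle for a fixed finite open cover, $\sup_{\mu \in \mP_T(\mX)} \inf_{P \succcurlyeq \mU} h_{\mu}(P) = \htop(\mU, T)$, which in the paper is an external input quoted from Glasner--Weiss \cite[Theorem 7.11]{GW06} (building on Romagnoli's work). Your proposal invokes exactly this theorem as a black box ("Theorem \ref{thm:local_var_prin} links $\htop(\mU,T)$ to measure-theoretic entropies...") and then carries out the scale-comparison argument with covers of controlled diameter and Lebesgue number. That argument is circular as a proof of the statement: it assumes the very identity it is supposed to establish. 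What your write-up actually proves is the downstream result, Theorem \ref{thm:mrid_mdim}, and there it essentially reproduces the paper's own proof (your facts (a) and (b) are Lemma \ref{lem:cover_res_entropies}, and your ball covers $\mU_{r,s}$ play the role of Lemma \ref{lem:lebesgue_number}), so that part is fine but it is an answer to a different question.

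To prove the statement itself you would need genuinely different and much heavier machinery, none of which appears in your proposal. The inequality $\inf_{P \succcurlyeq \mU} h_\mu(P) \leq \htop(\mU,T)$ for each invariant $\mu$ is already nontrivial (it is not enough to compare covering numbers at different scales, since the infimum is over partitions refining the fixed cover $\mU$, and a cheap choice of $P$ can have entropy far exceeding $\htop(\mU,T)$ unless one works with carefully constructed refinements; this is where Romagnoli's and Glasner--Weiss's combinatorial arguments enter). The converse inequality $\sup_\mu \inf_{P \succcurlyeq \mU} h_\mu(P) \geq \htop(\mU,T)$ requires constructing an invariant (in fact ergodic) measure whose entropy relative to every partition refining $\mU$ is at least $\htop(\mU,T)$, typically by counting $(\mU^n)$-names, forming empirical measures along maximal separated or spanning configurations, and passing to weak-star limits while controlling conditional entropies --- an argument in the spirit of the Misiurewicz proof of the classical variational principle but localized to the cover $\mU$, which is precisely the hard content of \cite{GW06}. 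Your scale parameters $\eta$ and $s'$ cannot substitute for this: they only relate different $\eps$-scales of the global quantity $S(\mX,\rho,T,\cdot)$ and never produce the measure, nor the per-measure upper bound, that the identity for a fixed $\mU$ demands.
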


In order to connect the above local variational principle for open covers with the definitions of mean R\'enyi information dimension and metric mean dimension, which are given in terms of entropy and complexity in scale $\eps$, we need the following two lemmas.

By $\diam(\mU)$ we will denote the diamater of the cover $\mU$, i.e. the maximal diameter of an element of $\mU$. Recall that the Lebesgue number of an open cover $\mU$ of a metric space $(\mX, \rho)$ is the largest number $\eps$ with the property that every open ball of radius $\eps$ is contained in an element of $\mU$. We will denote the Lebesgue number of $\mU$ by $\Leb(\mU)$. A set $A \subset \mX$ is called an $\eps$-net if for every $y \in \mX$ there exists $x \in A$ such that $\rho(x,y) < \eps$.

\begin{lem}\label{lem:lebesgue_number}
	Let $(\mX, \rho)$ be a compact metric space. Then for every $\eps>0$ there exists a finite open cover $\mU$ of $\mX$ such that $\diam(\mU) \leq \eps$ and $\Leb(\mU) \geq \frac{\eps}{4}$.
\end{lem}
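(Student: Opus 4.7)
The plan is to construct the cover explicitly from a finite net of the right scale. Since $(\mX, \rho)$ is compact and totally bounded, I can choose a finite $\frac{\eps}{4}$-net $\{x_1, \dots, x_N\} \subset \mX$, meaning that every point of $\mX$ lies within distance $\frac{\eps}{4}$ of some $x_i$. I then define
\[
\mU = \left\{ B\!\left(x_i, \tfrac{\eps}{2}\right) : i = 1, \dots, N \right\},
\]
where $B(x,r)$ denotes the open ball of radius $r$ around $x$. Compactness guarantees that finitely many such balls already cover $\mX$ (since the $\frac{\eps}{4}$-net itself shows every point is within $\frac{\eps}{4} < \frac{\eps}{2}$ of some $x_i$), so $\mU$ is a valid finite open cover.

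The diameter bound is immediate: each element $B(x_i, \frac{\eps}{2})$ has diameter at most $2 \cdot \frac{\eps}{2} = \eps$ by the triangle inequality, so $\diam(\mU) \leq \eps$.

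For the Lebesgue number, I need to show that every open ball of radius $\frac{\eps}{4}$ in $\mX$ sits inside some element of $\mU$. Fix $y \in \mX$. Since $\{x_1, \dots, x_N\}$ is an $\frac{\eps}{4}$-net, pick $i$ with $\rho(x_i, y) < \frac{\eps}{4}$. Then for any $z \in B(y, \frac{\eps}{4})$, the triangle inequality gives
\[
\rho(x_i, z) \leq \rho(x_i, y) + \rho(y, z) < \tfrac{\eps}{4} + \tfrac{\eps}{4} = \tfrac{\eps}{2},
\]
so $z \in B(x_i, \frac{\eps}{2})$. Hence $B(y, \frac{\eps}{4}) \subset B(x_i, \frac{\eps}{2}) \in \mU$, which shows $\Leb(\mU) \geq \frac{\eps}{4}$.

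There is really no hard step here; the only thing to watch is the interplay between the two scales, which is why I choose the net at scale $\frac{\eps}{4}$ and the balls at scale $\frac{\eps}{2}$: this leaves a margin of $\frac{\eps}{4}$ for the Lebesgue condition while keeping diameters below $\eps$.
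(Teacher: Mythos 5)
Your proof is correct and follows exactly the same construction as the paper's: take a finite $\frac{\eps}{4}$-net, form the cover by balls of radius $\frac{\eps}{2}$ centered at net points, and verify the two scale bounds via the triangle inequality. Nothing to add.
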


\begin{proof}
	Let $A \subset \mX$ be a finite $\frac{\eps}{4}$-net in $\mX$ (it exists due to the compactness of $\mX$). Let $\mathcal{U} = \{ B(x,\frac{\eps}{2}) : x \in A \}$, where $B(\cdot, \cdot)$ denotes an open ball in metric $\rho$. Then $\mU$ is an open cover of $\mX$ with $\diam(\mU) \leq \eps$. We shall prove that for every $y \in \mX$, the ball $B(y, \frac{\eps}{4})$ is contained in an element of $\mU$. For every $y \in \mX$ there exists $x \in A$ such that $\rho(x,y) < \frac{\eps}{4}$. Then $B(y, \frac{\eps}{4}) \subset B(x, \frac{\eps}{2})$ and $B(x, \frac{\eps}{2}) \in \mU$.
\end{proof}

\begin{lem}\label{lem:cover_res_entropies}
	Let $(\mX, \rho, T)$ be a topological 
	dynamical system and let $\mU$ be a finite open cover of $\mX$. Then
	\[ S(\mX, \rho, T, \diam(\mU)) \leq h_{\mathrm{top}}(\mU, T) \leq  S(\mX, \rho, T, \Leb(\mU)). \]
\end{lem}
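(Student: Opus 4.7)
The plan is to compare, for each $n \in \N$, the minimal cardinality $N(\mU^n)$ of a subcover of $\mU^n$ with the covering number $\#(\mX, \rho_n, \eps)$ at an appropriate scale $\eps$, and then divide by $n$ and pass to the limit.

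For the left inequality $S(\mX, \rho, T, \diam(\mU)) \leq h_{\mathrm{top}}(\mU, T)$, the key observation is that every element of $\mU^n$ has $\rho_n$-diameter at most $\diam(\mU)$. Indeed, any element of $\mU^n$ is of the form $A = \bigcap_{i=0}^{n-1} T^{-i} U_i$ with $U_i \in \mU$, and for any $x, y \in A$ we have $T^i x, T^i y \in U_i$, so $\rho(T^i x, T^i y) \leq \diam(U_i) \leq \diam(\mU)$ for every $0 \leq i < n$, hence $\rho_n(x, y) \leq \diam(\mU)$. Therefore a minimal subcover of $\mU^n$, of cardinality $N(\mU^n)$, is in particular an open cover of $\mX$ by sets of $\rho_n$-diameter at most $\diam(\mU)$, whence $\#(\mX, \rho_n, \diam(\mU)) \leq N(\mU^n)$. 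Taking logarithms, dividing by $n$, and letting $n \to \infty$ yields the left inequality.

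For the right inequality $h_{\mathrm{top}}(\mU, T) \leq S(\mX, \rho, T, \Leb(\mU))$, I go in the opposite direction: from any open cover $\mathcal{V}$ of $\mX$ whose elements have $\rho_n$-diameter at most $\Leb(\mU)$, I construct a subcover of $\mU^n$ of cardinality at most $|\mathcal{V}|$. For each $V \in \mathcal{V}$ and every $0 \leq i < n$, the set $T^i V$ has $\rho$-diameter at most $\Leb(\mU)$, since $\rho(T^i x, T^i y) \leq \rho_n(x, y) \leq \Leb(\mU)$ for all $x, y \in V$. Picking any $y_i \in T^i V$, one then has $T^i V \subset B(y_i, \Leb(\mU))$, and the latter ball is contained in some $U_{V, i} \in \mU$ by the definition of the Lebesgue number. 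Hence $V \subset \bigcap_{i=0}^{n-1} T^{-i} U_{V, i} \in \mU^n$, and choosing one such element of $\mU^n$ for each $V \in \mathcal{V}$ produces a subcover of $\mU^n$ of cardinality at most $|\mathcal{V}|$. Taking $\mathcal{V}$ to realize $\#(\mX, \rho_n, \Leb(\mU))$ gives $N(\mU^n) \leq \#(\mX, \rho_n, \Leb(\mU))$; again taking logarithms, dividing by $n$, and letting $n \to \infty$ completes the argument.

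No step presents a serious obstacle; the only minor technicality is the inclusion $T^i V \subset B(y_i, \Leb(\mU))$ when the $\rho$-diameter of $T^i V$ equals $\Leb(\mU)$ rather than being strictly less, which one handles via the standard reading of the Lebesgue number (namely, that the supremum defining it is attained and the ball of radius $\Leb(\mU)$ itself sits inside some element of $\mU$).
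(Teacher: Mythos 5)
Your argument reproduces exactly the two per-$n$ inequalities that the paper cites as its proof, namely $\#(\mX, \rho_n, \diam(\mU)) \leq N(\mU^n)$ and $N(\mU^n) \leq \#(\mX, \rho_n, \eps)$ for $\eps \leq \Leb(\mU)$, so the approach coincides with the paper's. The left inequality and its proof are clean.

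For the right inequality, however, the boundary technicality you flag at the end is a genuine gap and your proposed fix does not close it. Even granting that the supremum defining $\Leb(\mU)$ is attained, so that every \emph{open} ball of radius exactly $\Leb(\mU)$ lies inside an element of $\mU$, a set $T^i V$ whose $\rho$-diameter \emph{equals} $\Leb(\mU)$ is only guaranteed to lie in the closed ball of radius $\Leb(\mU)$ about any of its points, not the open one, so the inclusion $T^i V \subset B(y_i, \Leb(\mU))$ you invoke can fail. In fact the per-$n$ inequality itself fails at $\eps = \Leb(\mU)$: take $\mX = \{0,1\}$ with $\rho(0,1)=1$, $T=\id$, $\mU = \{\{0\},\{1\}\}$; then $\Leb(\mU) = 1$ and $N(\mU^n) = 2$ for all $n$, while $\#(\mX,\rho_n,1) = 1$ because $\{\mX\}$ is already an open cover of diameter $1$. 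What your construction does prove, run verbatim with a strict inequality, is $N(\mU^n) \leq \#(\mX,\rho_n,\eps)$ for every $\eps < \Leb(\mU)$, and that is all the paper ever needs: in the chain (\ref{eq:eps_mrid_geq_mmdim}) the comparison is made at scale $\eps/8$, which is strictly below $\Leb(\mU) \geq \eps/4$. The same looseness is implicit in the paper's own one-line proof; the clean statement is to replace $\Leb(\mU)$ by any $\eps < \Leb(\mU)$ in the second inequality of the lemma.
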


\begin{proof}
	This follows from the following easy inequalities
	\[ \#(\mX, \rho_n, \eps) \leq N(\mU^n)\ \ \text{ for } \diam(\mU) \leq \eps \]
	and
	\[ N(\mU^n) \leq  \#(\mX, \rho_n, \eps)\ \  \text{ for } \eps \leq \Leb(\mU) \]
	(which can be derived e.g. as in the proof of Theorem 6.1.8 in \cite{D11}).
\end{proof}

\begin{proof}(of Theorem \ref{thm:mrid_mdim})
	Fix $\eps > 0$ and a finite open cover $\mU$ of $\mX$ such that $\diam(\mU) \leq \eps$ and $\Leb(\mU) \geq \frac{\eps}{4}$ (it exists due to Lemma \ref{lem:lebesgue_number}). As any Borel partition $P$ of $\mX$ which refines $\mU$ must be of diameter at most $\eps$, applying Theorem \ref{thm:local_var_prin} and Lemma \ref{lem:cover_res_entropies} we obtain
	\begin{equation}
	\begin{gathered}\label{eq:eps_mrid_leq_mmdim} \sup \limits_{\mu \in \mP_T(\mX)} \inf \limits_{\diam(P) \leq \eps} h_{\mu}(P) \leq  \sup \limits_{\mu \in \mP_T(\mX)} \inf \limits_{P \succcurlyeq \mU} h_{\mu}(P) = h_{\mathrm{top}}(\mU, T) \leq \\ \leq S(\mX, \rho, T, \Leb(\mU)) \leq S(\mX, \rho, T, \frac{\eps}{4}).
	\end{gathered}
	\end{equation}
	
	For the opposite inequality, note that if $P$ is any partition of $\mX$ with $\diam(P) \leq \frac{\eps}{8}$, then $P$ refines $\mU$ as $\Leb(\mU) \geq \frac{\eps}{4}$. Therefore applying again  Theorem \ref{thm:local_var_prin} and Lemma \ref{lem:cover_res_entropies} we get
	
	\begin{equation}
	\begin{gathered}\label{eq:eps_mrid_geq_mmdim}
	S(\mX, \rho, T, \eps) \leq S(\mX, \rho, T, \diam(\mU)) \leq h_{\mathrm{top}}(\mU, T) = \\
	= \sup \limits_{\mu \in \mP_T(\mX)} \inf \limits_{P \succcurlyeq \mU} h_{\mu}(P) \leq \sup \limits_{\mu \in \mP_T(\mX)} \inf \limits_{\diam(P) \leq \frac{\eps}{8}} h_{\mu}(P).
	\end{gathered}
	\end{equation}
	
	Dividing both sides of (\ref{eq:eps_mrid_leq_mmdim}) and (\ref{eq:eps_mrid_geq_mmdim}) by $\log \frac{1}{\eps}$ and taking the limits yields the desired inequalities (note that $\lim \limits_{\eps \to 0}\frac{\log \frac{1}{\eps}}{\log C \frac{1}{\eps}} = 1$ for any $C>0$).
\end{proof}

\begin{rem}
	It follows from \cite[Proposition 5]{HMRY04} that in Theorem \ref{thm:local_var_prin} it suffices to take supremum over the set $\mE_T(\mX)$ of ergodic measures rather than $\mP_T(\mX)$. Consequently, the same is true for the variational principle of Theorem \ref{thm:mrid_mdim}.
\end{rem}

\section{Mean R\'enyi information dimension of a measure}\label{sec:single_mrid}

Following (\ref{eq:rid_pesin}), we introduce the upper and lower \emph{mean R\'enyi information dimensions} of an invariant measure $\mu \in \mP_{T}(\mX)$ as:
\[\overline{MRID}(\mathcal{X},T,\mu,d)=\limsup_{\eps\rightarrow0}\frac{1}{\log\frac{1}{\eps}}\inf_{\diam(P)\leq\eps}h_{\mu}(P)\]
and
\[\underline{MRID}(\mathcal{X},T,\mu,d)=\liminf_{\eps\rightarrow0}\frac{1}{\log\frac{1}{\eps}}\inf_{\diam(P)\leq\eps}h_{\mu}(P),\]
where the infimum is taken over all Borel partitions $P$ of $\mX$ with diameter not greater than $\eps$.

Geiger and Koch proposed recently the following similar definition for stationary stochastic processes with values in $\R$. Let $\mP_{\sigma}(\R^\Z)$ denote the set of shift-invariant Borel probability measures on $\R^\Z$.
\begin{defn}
	For $\mu \in \mP_{\sigma}(\R^\Z)$, the lower and upper \emph{information dimension rates} are defined as
	\[ \overline{d}(\mu) = \limsup \limits_{m \to \infty} \lim \limits_{n \to \infty} \frac{H_{\mu}(\bigvee_{i=0}^{n-1}\sigma^{-i} P_{m})}{n \log m} = \limsup \limits_{m \to \infty} \frac{h_\mu(P_m)}{\log m}, \]
	\[ \underline{d}(\mu) = \liminf \limits_{m \to \infty} \lim \limits_{n \to \infty} \frac{H_{\mu}(\bigvee_{i=0}^{n-1}\sigma^{-i} P_{m})}{n \log m} = \liminf \limits_{m \to \infty} \frac{h_\mu(P_m)}{\log m},\]
	where $P_m := \pi_0^{-1}\big( \{[\frac{i}{m}, \frac{i+1}{m}) : m \in \Z \}\big)$.
\end{defn}
Note that if $H_\mu(P_1) = \infty$, then $\overline{d}(\mu) = \underline{d}(\mu) = \infty$. Geiger and Koch proved (\cite[Theorem 1]{GK17} and \cite[Theorem 18]{GK18}) that
\begin{equation}\label{e:geiger_koch_rate} \overline{d}(\mu) = \overline{\dim}_{R, 2} (\mu) \text{ and } \underline{d}(\mu) = \underline{\dim}_{R, 2} (\mu) \text{ for } \mu \in \mP_{\sigma}([0,1]^{\Z}),
\end{equation}
where $\underline{\dim}_{R, 2} (\mu)$ and $\overline{\dim}_{R, 2}$ are the lower and upper \emph{rate-distortion dimensions} given as
\begin{equation}\label{eq:rdim_def}
\begin{gathered}
\overline{\dim}_{R, 2}(\mu) = \limsup \limits_{\eps \to 0} \lim \limits_{n \to \infty} \frac{\tilde{R}_{\mu, 2}(n, \eps)}{\log \frac{1}{\eps}} = \limsup \limits_{\eps \to 0} \frac{\tilde{R}_{\mu, 2}(\eps)}{\log \frac{1}{\eps}}, \\ \underline{\dim}_{R, 2}(\mu) = \liminf \limits_{\eps \to 0} \lim \limits_{n \to \infty} \frac{\tilde{R}_{\mu, 2}(n, \eps)}{\log \frac{1}{\eps}} = \liminf \limits_{\eps \to 0} \frac{\tilde{R}_{\mu, 2}(\eps)}{\log \frac{1}{\eps}}.
\end{gathered}
\end{equation}
This can be seen as a generalization of \cite[Proposition 3.3]{kawabata1994rate} from the Bernoulli case to general stationary processes. The following proposition states that mean R\'enyi information dimension and information dimension rate coincide for ergodic shift-invariant measures corresponding to almost surely bounded processes.
\begin{prop}\label{prop:mrid_d_subshifts}
	Let $\mu \in \mE_{\sigma}([0,1]^\Z)$. Then
	\[ \overline{MRID}([0,1]^\Z, \sigma,\mu,\tau) = \overline{d}(\mu) \text{ and } \underline{MRID}([0,1]^\Z, \sigma,\mu,\tau)  = \underline{d}(\mu)\]
\end{prop}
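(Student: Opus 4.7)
The plan is to establish both equalities by proving matching upper and lower bounds that relate $\inf_{\diam(P)\leq\eps}h_\mu(P)$ (appearing in $\overline{MRID},\underline{MRID}$) to $h_\mu(P_m)$ (appearing in $\overline{d},\underline{d}$) under the coupled scaling $m\sim 1/\eps$. Since $\log m\sim\log(1/\eps)$ in this regime, dividing by $\log(1/\eps)$ and passing to $\limsup$ (respectively $\liminf$) will identify $\overline{MRID}$ with $\overline{d}$ (respectively $\underline{MRID}$ with $\underline{d}$). I write $\tau$ for the product metric from (\ref{eq:product_metric}) on $[0,1]^\Z$ built from the euclidean metric on $[0,1]$, and use that $|x_0-y_0|\leq\tau(x,y)$.

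For the upper bound $\overline{MRID}(\mu)\leq\overline{d}(\mu)$, I would fix $\eps>0$, pick $m\sim 1/\eps$ and $N\sim\log(1/\eps)$ large enough that both $3/m$ and $2^{1-N}$ fall below $\eps/2$, and use the explicit test partition $Q_\eps:=\bigvee_{i=-N}^{N}\sigma^{-i}P_m$. Its $\tau$-diameter is bounded by $3/m+2^{1-N}\leq\eps$, since points in a common atom agree in each of the coordinates $i=-N,\dots,N$ up to $1/m$. Because joining $P_m$ with finitely many of its shifts leaves the dynamical entropy unchanged (a standard one-line $(n+2N)/n\to 1$ argument using shift-invariance of $\mu$), one has $h_\mu(Q_\eps)=h_\mu(P_m)$. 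Therefore $\inf_{\diam(P)\leq\eps}h_\mu(P)\leq h_\mu(P_m)$, and the upper bounds follow by dividing by $\log(1/\eps)$ and passing to $\limsup/\liminf$.

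For the lower bound $\overline{MRID}(\mu)\geq\overline{d}(\mu)$, take any Borel partition $P$ of $[0,1]^\Z$ with $\diam_\tau(P)\leq\eps$. Then $\pi_0(A)$ has euclidean diameter at most $\eps$ for every $A\in P$, hence meets at most a fixed constant $K$ of the intervals $[i/m,(i+1)/m)$ defining $P_m$ when $m=\lceil 1/\eps\rceil$; for $\eps$ small one may take $K=3$. This gives $H_\mu(P_m\mid P)\leq\log K$, and the standard Rokhlin-type inequality $h_\mu(P_m)\leq h_\mu(P)+H_\mu(P_m\mid P)$ (itself proved from $H_\mu(\bigvee_{i=0}^{n-1}\sigma^{-i}\alpha)\leq H_\mu(\bigvee_{i=0}^{n-1}\sigma^{-i}\beta)+nH_\mu(\alpha\mid\beta)$) yields $h_\mu(P)\geq h_\mu(P_m)-\log K$. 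Taking infimum over such $P$ and dividing by $\log(1/\eps)$ produces the lower bounds.

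The main technical nuisance is bookkeeping in the upper-bound construction: one has to simultaneously force the tail $\sum_{|i|>N}2^{-|i|}$ and the central contribution $3/m$ of the product metric $\tau$ to lie below $\eps$, while keeping $\log m$ asymptotic to $\log(1/\eps)$ so that no extraneous multiplicative factor appears in the limit. A conceptual remark worth making is that ergodicity of $\mu$ does not appear to enter this argument at all; the equivalence between $\overline{MRID}$ and $\overline{d}$ should hold for every $\mu\in\mP_\sigma([0,1]^\Z)$, and the ergodicity hypothesis is presumably inherited from the Geiger--Koch identity (\ref{e:geiger_koch_rate}), which uses it.
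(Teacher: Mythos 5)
Your proof is correct, and it takes a genuinely different route from the paper's for the harder inequality $\overline{MRID}\geq\overline{d}$ (and the lower-envelope analogue). The paper's argument goes through rate-distortion theory: it invokes Velozo--Velozo \cite[Proposition 2]{velozo2017rate} (which states that $R_{\mu,2}(\eps)\leq h_\mu(P)$ when $\diam(P)<\eps$ and $\mu$ is ergodic), then the inequality $\tilde{R}_{\mu,2}\leq R_{\mu,2}$, and finally the Geiger--Koch identity (\ref{e:geiger_koch_rate}) $\overline{d}(\mu)=\overline{\dim}_{R,2}(\mu)$. Both external ingredients require ergodicity. Your replacement is a self-contained conditional-entropy argument: a Borel partition $P$ of $\tau$-diameter $\leq\eps$ has $\pi_0(A)$ of euclidean diameter $\leq\eps$ for every atom $A$, hence $H_\mu(P_m\mid P)\leq\log 3$ for $m=\lceil 1/\eps\rceil$, and the Abramov--Rokhlin inequality $h_\mu(P_m)\leq h_\mu(P)+H_\mu(P_m\mid P)$ gives $h_\mu(P)\geq h_\mu(P_m)-\log 3$ using only shift-invariance. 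Combined with the careful $m\sim 1/\eps$ bookkeeping you do, this delivers the bound without any reference to rate-distortion functions. Your easy direction (the explicit small-diameter refinement $Q_\eps=\bigvee_{i=-N}^{N}\sigma^{-i}P_m$ and the $(n+2N)/n\to 1$ computation) is exactly what the paper leaves implicit behind "clear from the definitions." What your route buys is substantial: since ergodicity is never used, you obtain the Proposition for every $\mu\in\mP_\sigma([0,1]^\Z)$, which affirmatively answers the open Problem the paper poses immediately after the Proposition.
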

\begin{proof}
	It is clear from the definitions that $\overline{MRID}([0,1]^\Z, \sigma,\mu,\tau) \leq \overline{d}(\mu)$ and $\underline{MRID}([0,1]^\Z, \sigma,\mu,\tau)  \leq \underline{d}(\mu)$ for any $\mu \in \mP_{\sigma}([0,1]^\Z)$. By \cite[Proposition 2]{velozo2017rate}, if $\diam(P)<\eps$
	and $\mu$ is ergodic then $R_{\mu, 2}(\varepsilon)\leq h_{\mu}(P)$ (\cite[Proposition 2]{velozo2017rate} is proved for $L^1$-distortion function, however it is easy to see that it is valid also for $L^p,\ p \in [1, \infty)$). As $\tilde{R}_{\mu, p}(\eps) \leq R_{\mu,p} (\eps)$ (see Proposition \cite[Proposition C-B.1]{GS20mmdim_compress}), the result follows from (\ref{e:geiger_koch_rate}).
\end{proof}
\begin{problem}
	Is Proposition \ref{prop:mrid_d_subshifts} true for non-ergodic measures?
\end{problem}

\section{Mean Brin-Katok local entropy}\label{sec:brin_katok}
The goal of this section is to derive a novel application in ergodic theory which follows from the techniques of the article. Let $(\mX, \rho, T)$ be a topological dynamical system. Following \cite[Definition 6.7.1]{Dz2005},
for an ergodic measure $\mu\in\mathcal{{E}}_{T}(\mathcal{{X}})$ and a point $x \in \mX$ we define the \emph{Brin-Katok local entropy} by:
\[h_{\mu}^{BK}(\eps,x)=\limsup_{n \to \infty}-\frac{1}{n}\log\mu(B_{n}(x,\eps)),\]
where $B_{n}(x,\eps)=\{y\in\mathcal{{X}} : \rho_{n}(x,y)<\eps\}$.
It is easy to see that by the ergodicity of $\mu$, the function $x \mapsto h_{\mu}^{BK}(\eps,x)$ is $\mu$-almost surely constant and we denote this constant by $h_{\mu}^{BK}(\eps)$. Let us define the \emph{mean Brin-Katok local entropy} of $T$ by:
\[\overline{\mBKe}(\mX, \rho, T)=\limsup_{\eps\rightarrow0}\frac{1}{\log\frac{1}{\eps}}\sup_{\mu\in\mE_{T}(\mX)}h_{\mu}^{BK}(\eps)\]
and
\[\underline{\mBKe}(\mX, \rho, T)=\liminf_{\eps\rightarrow0}\frac{1}{\log \frac{1}{\eps}}\sup_{\mu\in\mE_{T}(\mX)}h_{\mu}^{BK}(\eps).\]
\begin{lem}\label{l:brin_katok}
	Let $(\mX, \rho, T)$ be a topological dynamical system. For $\mu\in\mathcal{E}_{T}(\mathcal{X})$
	and $\varepsilon>0$ it holds that \[h_{\mu}^{BK}(\eps)\leq\inf_{\diam(P)<\eps}h_{\mu}(P),\]
	where the infimum is taken over all finite Borel partitions of $\mX$ with diameter smaller than $\eps.$
\end{lem}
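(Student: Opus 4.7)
\medskip

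The plan is to fix a finite Borel partition $P$ of $\mX$ with $\diam(P) < \eps$, show directly that $h_\mu^{BK}(\eps) \leq h_\mu(P)$, and then take the infimum over such $P$. The core observation is a pointwise comparison between Bowen balls and refined partition atoms, combined with the Shannon--McMillan--Breiman theorem.

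First I would establish the geometric inclusion. For $n \in \N$ and $x \in \mX$, let $P_n(x)$ denote the atom of the refined partition $P_n := P \vee T^{-1}P \vee \cdots \vee T^{-(n-1)}P$ containing $x$. If $y \in P_n(x)$, then for every $0 \leq i \leq n-1$ the points $T^i x$ and $T^i y$ lie in a common atom of $P$, hence $\rho(T^i x, T^i y) \leq \diam(P) < \eps$. Taking the maximum over $i$ gives $\rho_n(x,y) < \eps$, which means $y \in B_n(x, \eps)$. Consequently $P_n(x) \subset B_n(x,\eps)$, and therefore
\[ \mu(P_n(x)) \leq \mu(B_n(x,\eps)) \qquad \text{for every } x \in \mX \text{ and } n \in \N. \]

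Next I would apply the Shannon--McMillan--Breiman theorem. Since $\mu$ is ergodic and $P$ is a finite Borel partition, for $\mu$-a.e. $x \in \mX$ one has
\[ \lim_{n \to \infty} -\frac{1}{n} \log \mu(P_n(x)) = h_\mu(P). \]
Combining this with the inclusion above yields, for $\mu$-a.e. $x$,
\[ h_\mu^{BK}(\eps, x) = \limsup_{n \to \infty} -\frac{1}{n} \log \mu(B_n(x,\eps)) \leq \limsup_{n \to \infty} -\frac{1}{n} \log \mu(P_n(x)) = h_\mu(P). \]
Since $x \mapsto h_\mu^{BK}(\eps,x)$ is $\mu$-almost surely equal to the constant $h_\mu^{BK}(\eps)$, this gives $h_\mu^{BK}(\eps) \leq h_\mu(P)$. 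Taking the infimum over all finite Borel partitions $P$ with $\diam(P) < \eps$ finishes the proof.

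There is no real obstacle here: the argument is essentially a one-line application of SMB once the inclusion $P_n(x) \subset B_n(x,\eps)$ is noted. The only point that deserves a brief sanity check is the strictness $\rho_n(x,y) < \eps$, which follows from $\rho(T^i x, T^i y) \leq \diam(P) < \eps$ coordinate by coordinate, and the measurability/a.s.\ constancy of $h_\mu^{BK}(\eps,\cdot)$ used in the last step, which is exactly what ergodicity of $\mu$ supplies.
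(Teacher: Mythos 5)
Your proof is correct and follows essentially the same route as the paper: the inclusion $P_n(x) \subset B_n(x,\eps)$ for $\diam(P) < \eps$, followed by the Shannon--McMillan--Breiman theorem and the a.s.\ constancy of $h_\mu^{BK}(\eps,\cdot)$ under ergodicity. No discrepancies to flag.
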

The proof of Lemma \ref{l:brin_katok} uses Shannon-McMillan-Breiman theorem and can be essentially found in
\cite{BK83}. We include it for the convenience of the reader.
\begin{proof} For a partition $P$ of $\mathcal{X}$, denote by $P(x)$ the element
	of $P$ containing $x \in \mX$. If $\diam(P)<\varepsilon$, then $(\bigvee \limits_{i=0}^{n-1} T^{-i}P)(x)\subset B_{n}(x,\varepsilon)$,
	hence
	\[-\log\mu(B_{n}(x,\varepsilon))\leq-\log\mu\Big((\bigvee \limits_{i=0}^{n-1} T^{-i}P)(x)\Big).\]
	By the Shannon-McMillan-Breiman theorem (\cite[Theorem I.5.1]{shields1996ergodic})
	\[h_{\mu}^{BK}(\eps,x)=\limsup \limits_{n \to \infty}-\frac{1}{n}\log\mu(B_{n}(x,\eps))\leq  \limsup\limits_{n\to\infty}-\frac{1}{n}\log\mu\big((\bigvee \limits_{i=0}^{n-1}T^{-i}P)(x)\big)=h_{\mu}(P)\]
	for $\mu$-a.e. $x\in\mathcal{X}$, hence $h_{\mu}^{BK}(\eps)\leq\inf \limits_{\diam(P)<\eps}h_{\mu}(P)$.
\end{proof} Applying Theorem \ref{thm:mrid_mdim} we obtain the following corollary:

\begin{cor}\label{cor:mbke_mrid_ineq} Let $(\mX, \rho, T)$ be a topological dynamical system. Then \[\overline{mBKe}(\mX, \rho, T)\leq \ummdim(\mX, \rho, T)\]
	and
	\[\underline{mBKe}(\mX, \rho, T)\leq \lmmdim(\mX, \rho, T).\]
\end{cor}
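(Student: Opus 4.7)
The plan is to chain Lemma \ref{l:brin_katok} with Theorem \ref{thm:mrid_mdim}, handling only a mild bookkeeping issue about the strict vs.\ non-strict diameter bound.

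First, fix $\eps>0$ and an ergodic measure $\mu\in\mE_T(\mX)$. By Lemma \ref{l:brin_katok},
\[h_\mu^{BK}(\eps)\leq \inf_{\diam(P)<\eps}h_\mu(P).\]
Any partition with $\diam(P)\leq \eps/2$ also satisfies $\diam(P)<\eps$, so the set on the right of the infimum contains $\{P:\diam(P)\leq \eps/2\}$, which gives
\[h_\mu^{BK}(\eps)\leq \inf_{\diam(P)\leq\eps/2}h_\mu(P).\]
Taking supremum over $\mu\in\mE_T(\mX)\subset\mP_T(\mX)$ yields
\[\sup_{\mu\in\mE_T(\mX)}h_\mu^{BK}(\eps)\ \leq\ \sup_{\mu\in\mP_T(\mX)}\inf_{\diam(P)\leq\eps/2}h_\mu(P).\]

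Second, I would divide by $\log\frac{1}{\eps}$ and take $\limsup_{\eps\to 0}$ on both sides. The left-hand side is by definition $\overline{\mBKe}(\mX,\rho,T)$. For the right-hand side, the substitution $\delta=\eps/2$ together with
\[\lim_{\delta\to 0}\frac{\log\frac{1}{\delta}}{\log\frac{1}{2\delta}}=1\]
(which is the same harmless rescaling already used at the end of the proof of Theorem \ref{thm:mrid_mdim}) shows
\[\limsup_{\eps\to 0}\frac{1}{\log\frac{1}{\eps}}\sup_{\mu\in\mP_T(\mX)}\inf_{\diam(P)\leq\eps/2}h_\mu(P)=\limsup_{\delta\to 0}\frac{1}{\log\frac{1}{\delta}}\sup_{\mu\in\mP_T(\mX)}\inf_{\diam(P)\leq\delta}h_\mu(P),\]
and the latter equals $\ummdim(\mX,\rho,T)$ by Theorem \ref{thm:mrid_mdim}. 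This proves the first inequality of the corollary. Repeating verbatim with $\liminf$ in place of $\limsup$ gives the second inequality.

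There is no real obstacle; the only point worth flagging is the strict inequality in Lemma \ref{l:brin_katok}, which forces us to infimize over partitions of diameter $\leq\eps/2$ (rather than $\leq\eps$) before invoking Theorem \ref{thm:mrid_mdim}, and to then absorb the resulting factor of $2$ via the $\log\frac{1}{\delta}/\log\frac{1}{2\delta}\to 1$ observation.
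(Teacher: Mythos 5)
Your proof is correct and follows exactly the route the paper intends: the corollary is presented as an immediate consequence of chaining Lemma \ref{l:brin_katok} with Theorem \ref{thm:mrid_mdim}, the supremum over $\mE_T(\mX)$ being dominated by that over $\mP_T(\mX)$, and the strict-vs.-non-strict diameter mismatch being absorbed by the same $\log\frac{1}{\delta}/\log\frac{1}{2\delta}\to1$ rescaling used in the proof of Theorem \ref{thm:mrid_mdim} itself. The bookkeeping you flag is real but harmless, and you resolve it correctly.
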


\begin{problem}\label{problem3}\footnote{This question was recently answered affirmatively by Ruxi Shi in \cite{Shi21}.}
	Does equality hold in Corollary \ref{cor:mbke_mrid_ineq}?
\end{problem}

By Corollary \ref{cor:mbke_mrid_ineq} we have the following somewhat surprising proposition (note that $\eps_0$ is chosen uniformly for $\mu \in \mE_T(\mX)$).

\begin{prop}\label{prop:mbke_f_ball_bound}
	Let $(\mX, \rho, T)$ be a topological dynamical system with $\ummdim(\mX, \rho, T) < \infty$. For every $\delta > 0$ there exists $\eps_0 > 0$ such that for \textit{every} $0 < \eps < \eps_0$ and \textit{every} ergodic measure $\mu \in \mE_T(\mX)$, for $\mu$-almost every point $x \in \mX$ there exists $n_0 \in \N$ such that for every $n \geq n_0$ the inequality
	\[\mu(B_n(x,\eps)) \geq \eps^{n(\ummdim(\mX, \rho, T) + \delta)}\]
	holds.
\end{prop}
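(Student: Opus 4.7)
The plan is to deduce the proposition directly from Corollary \ref{cor:mbke_mrid_ineq} combined with the definition of Brin-Katok local entropy as a $\limsup$. The key observation is that the uniformity of $\eps_0$ in $\mu$ that is built into the statement matches precisely the uniformity provided by the $\sup_{\mu \in \mE_T(\mX)}$ appearing inside the definition of $\overline{\mBKe}$.

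Fix $\delta > 0$ and write $D := \ummdim(\mX, \rho, T)$, which is finite by assumption. By Corollary \ref{cor:mbke_mrid_ineq} we have $\overline{\mBKe}(\mX, \rho, T) \leq D$, so the definition of $\overline{\mBKe}$ as a $\limsup$ in $\eps$ yields some $\eps_0 > 0$ such that for every $0 < \eps < \eps_0$,
\[\frac{1}{\log \frac{1}{\eps}} \sup_{\mu \in \mE_T(\mX)} h_\mu^{BK}(\eps) \leq D + \tfrac{\delta}{2}.\]
This $\eps_0$ depends only on $\delta$ (and the system), not on $\mu$. Consequently, for every ergodic $\mu$ and every $0 < \eps < \eps_0$,
\[h_\mu^{BK}(\eps) \leq \bigl(D + \tfrac{\delta}{2}\bigr)\log \tfrac{1}{\eps}.\]

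Fix such $\eps$ and $\mu$. By ergodicity the function $x \mapsto h_\mu^{BK}(\eps, x)$ equals the constant $h_\mu^{BK}(\eps)$ for $\mu$-almost every $x \in \mX$. Unwinding the definition of $h_\mu^{BK}(\eps, x) = \limsup_{n \to \infty} -\frac{1}{n}\log \mu(B_n(x, \eps))$, for $\mu$-a.e.\ $x$ there exists $n_0 = n_0(x, \eps, \mu)$ such that for all $n \geq n_0$,
\[-\frac{1}{n}\log \mu(B_n(x, \eps)) \leq h_\mu^{BK}(\eps) + \tfrac{\delta}{2}\log \tfrac{1}{\eps} \leq (D + \delta)\log \tfrac{1}{\eps}.\]
Multiplying by $-n$ and exponentiating gives $\mu(B_n(x,\eps)) \geq \eps^{n(D+\delta)}$, which is the required bound.

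The argument is essentially formal once Corollary \ref{cor:mbke_mrid_ineq} is in hand; there is no serious obstacle. The only subtle point worth double-checking is the order of quantifiers: the $\eps_0$ must be chosen before the measure, and this is guaranteed because the supremum over $\mu \in \mE_T(\mX)$ is already inside the definition of $\overline{\mBKe}$, so the inequality $\overline{\mBKe} \leq D$ yields an $\eps_0$ working uniformly for \emph{all} ergodic measures at once. The $n_0$ on the other hand genuinely depends on $x$, $\eps$, and $\mu$, as is inherent in the pointwise $\limsup$ definition of Brin-Katok local entropy.
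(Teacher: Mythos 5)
Your proof is correct and is precisely the argument the paper intends: the proposition is stated without proof, immediately after Corollary \ref{cor:mbke_mrid_ineq}, as a direct unwinding of the definitions of $\overline{\mBKe}$ and $h_\mu^{BK}$. Your handling of the quantifier order is exactly the right subtlety to flag --- $\eps_0$ is uniform in $\mu$ because the supremum over ergodic measures sits \emph{inside} the $\limsup$ defining $\overline{\mBKe}$, while $n_0$ necessarily depends on $x$ (and $\eps$, $\mu$) via the pointwise $\limsup$ in the definition of Brin-Katok local entropy.
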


\bibliographystyle{alpha}
\bibliography{universal_bib}

\end{document}